\documentclass{tran-l}
\usepackage{amssymb,amsfonts}
\usepackage[all,arc]{xy}
\usepackage{enumerate}
\usepackage{graphicx}
\usepackage{mathrsfs}
\usepackage{amsmath}
\usepackage{amsfonts}
\usepackage{hyperref}
\usepackage{cleveref}
\usepackage{tikz}
\usepackage{xypic}
\usepackage[utf8]{inputenc}
\usepackage{import}
\usepackage{xifthen}
\usepackage{pdfpages}
\usepackage{transparent}
\usepackage{subfigure}

\newcommand{\R}{\mathbb{R}}
\newcommand{\C}{\mathbb{C}}

\newcommand{\im}{\text{im}}

\newcommand{\Z}{\mathbb{Z}}

\newcommand{\ra}{\rightarrow}

\newcommand{\bb}{\backslash\backslash}
\newcommand{\rdet}[1]{\operatorname{det}^{\operatorname{r}}_{\mathcal N#1}}
\newcommand{\cut}[1]{\stackrel{#1}{\rightsquigarrow}}
\newcommand{\tautwo}{\tau^{(2)}}

\newtheorem{thm}{Theorem}[section]
\newtheorem{cor}[thm]{Corollary}
\newtheorem{prop}[thm]{Proposition}
\newtheorem{lem}[thm]{Lemma}

\newtheorem*{thm1}{Theorem \ref{LeadingCoefficientEqualsRelativeTorsion}}
\newtheorem*{thm2}{Theorem \ref{LeadingCoefficientEqualsGuts}}
\newtheorem*{thm3}{Theorem \ref{InvarianceOfLeadingCoefficientOpenCone}}

\newtheorem{defn}[thm]{Definition}

\newtheorem{rem}[thm]{Remark}

\title[Guts determine the leading coefficients]{Guts determine the leading coefficients \\of $L^2$-Alexander torsions}

\author{Jianru Duan}
\address{Beijing International Center for Mathematical Research, No. 5 Yiheyuan Road,
Haidian District, Beijing 100871, People’s Republic of China}
\email{duanjr@stu.pku.edu.cn}

\begin{document}
\bibliographystyle{amsalpha}
\maketitle

\begin{abstract}
    For 3-manifolds, the leading coefficient of the $L^2$-Alexander torsion is a numerical invariant of a real first cohomology class. We show that the leading coefficient equals the relative $L^2$-torsion of the manifold cut up along a norm-minimizing surface  dual to the cohomology class. Furthermore, the leading coefficient equals the relative $L^2$-torsion of the guts associated to the cohomology class. Finally, we prove that the leading coefficient is constant on any open Thurston cone. The main ingredients are a new criterion for the convergence of Fuglede--Kadison determinants and the work of Agol and Zhang on guts of 3-manifolds.
\end{abstract}

\section{Introduction}
Let $N$ be a connected, orientable, irreducible, compact 3-manifold with empty or toral boundary. For any real cohomology class $\phi\in H^1(N;\R)$, the $L^2$-Alexander torsion introduced by Dubois, Friedl and L\"uck \cite{dubois2015l2} is a function 
\[
    \tautwo(N,\phi):\R_+\longrightarrow[0,+\infty)
\]
well defined up to multiplying by a function $t\mapsto t^r$ for a constant $r\in \R$. For knot complements, a similar invariant called the $L^2$-Alexander invariant was first defined by Li and Zhang \cite{li2006l2, li2006alexander}. The $L^2$-Alexander torsion is similar to the ordinary Alexander polynomial for knot complements in many ways. In particular, the function $t\mapsto\tautwo(N,\phi)(t)$ grows like monomial polynomials when $t$ tends to $0^+$ and $+\infty$, and there is a well-defined leading coefficient of this function, which is a positive real number denoted by $C(N,\phi)$ (see Section \ref{SectionPreliminaries} for details).

Consider an oriented surface $S$ properly embedded in $N$, we denote by $\eta(S)$ or $S\times[-1,1]$ a closed regular neighborhood of $S$, furthermore we write $N\bb S:=N\setminus (S\times (-1,1))$ and $S_\pm:=S\times \{\pm1\}$. In this paper, we will prove the following:

\begin{thm}\label{LeadingCoefficientEqualsRelativeTorsion}
        Let $N$ be a connected, orientable, irreducible, compact 3-manifold with empty or toral boundary. Let $\phi\in H^1(N;\Z)$ and let $S$ be a Thurston norm-minimizing surface dual to $\phi$, then
        \[
        C(N,\phi)=\tautwo(N\backslash\backslash  S,S_-).
        \]
\end{thm}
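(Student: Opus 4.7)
The plan is to compute $C(N,\phi)$ from a cellular model of $N$ adapted to the cut along $S$, apply multiplicativity of $L^2$-Alexander torsion across a Mayer--Vietoris short exact sequence, and extract the $t\to\infty$ limit using the new convergence criterion for Fuglede--Kadison determinants announced in the abstract.

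First I would fix a CW-structure on $N$ in which $S_+$, $S_-$, $N\bb S$ and $\eta(S)=S\times[-1,1]$ are all subcomplexes. Lifting to the universal cover produces a short exact sequence of free $\Z[\pi_1 N]$-chain complexes
\[
0\to C_*(S_-\sqcup S_+)\to C_*(N\bb S)\oplus C_*(\eta(S))\to C_*(N)\to 0.
\]
Twisting the differentials by the representation $t^{\phi}$ and tensoring with $\ell^2(\pi_1 N)$ yields a short exact sequence of Hilbert $\mathcal{N}(\pi_1 N)$-chain complexes for every $t>0$. By the multiplicative behaviour of $L^2$-Alexander torsion under short exact sequences, $\tautwo(N,\phi)(t)$ is the product of the $L^2$-Alexander torsions of the three pieces; since $\eta(S)$ deformation-retracts to $S$ and $\phi|_{\pi_1 S}$ vanishes, the $\eta(S)$-contribution cancels one copy of $S$ in the pair, up to an explicit factor accounting for the $\phi$-twist.

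The essential asymmetry is that the two inclusions $S_\pm\hookrightarrow\eta(S)\hookrightarrow N$ sit at different heights of the $\phi$-filtration on the universal cover: crossing $\eta(S)$ from $S_-$ to $S_+$ raises $\phi$ by $1$. Hence, after the $t^\phi$-twist, the block of the boundary matrix corresponding to $S_+$ is scaled by $t$ relative to the $S_-$-block. After the cancellations above, the remaining expression takes the form $t^{x_T(\phi)}\cdot D(t)$, where $D(t)$ is the regular Fuglede--Kadison determinant of an operator whose $t\to\infty$ limit should be the boundary operator computing $\tautwo(N\bb S, S_-)$. The exponent $x_T(\phi)$ emerges precisely because $S$ is Thurston-norm minimizing, so $\chi_-(S)=x_T(\phi)$ and the $S_+$-block produces exactly the normalization built into the definition of $C(N,\phi)$.

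The main obstacle is justifying this passage to the limit. Since Fuglede--Kadison determinants are only lower semi-continuous under strong-operator convergence of positive operators, one cannot simply take a termwise limit of the block matrix; this is exactly the point at which the new convergence criterion must be invoked. I would verify its hypotheses by exhibiting the diagonal block associated to $(N\bb S, S_-)$ as the dominant part in an appropriate spectral sense, with the off-diagonal entries decaying in $t^{-1}$ and contributing nothing in the limit. Thurston norm-minimality is crucial throughout: without it, the polynomial exponent produced by the $S_+$-block would exceed $x_T(\phi)$, and the resulting limit could not be identified with a single relative torsion. Finally, I would check that the indeterminacy $t\mapsto t^r$ in $\tautwo(N,\phi)$ and the choice of normalization for $C$ are consistent with the convention used in defining $\tautwo(N\bb S, S_-)$.
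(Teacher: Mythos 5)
Your strategy has the right skeleton (a cell structure adapted to the cut along $S$, an identification of the $t$-twisted part, and a passage to the limit via the new convergence criterion), but the step you use to launch it is not valid as stated. From the short exact sequence $0\to C_*(S_-\sqcup S_+)\to C_*(N\bb S)\oplus C_*(\eta(S))\to C_*(N)\to 0$ you cannot conclude that $\tautwo(N,\phi)(t)$ is the product of the $L^2$-Alexander torsions of the three pieces. First, the pieces are not $L^2$-acyclic over $\mathcal N(\pi_1 N)$: for an incompressible norm-minimizing $S$ with $\chi(S)<0$ the induced $\ell^2$-complex of $S$ has $\ell^2$-Betti number $-\chi(S)\neq 0$ in degree one, so its torsion vanishes in the paper's convention, and the multiplicativity of torsion in a short exact sequence carries a correction term coming from the long exact homology sequence, which here is exactly where the content lives. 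Second, and decisively, $\phi$ restricts to zero on $\pi_1$ of every piece, so the torsions of the pieces are independent of $t$; if your product formula held, $\tautwo(N,\phi)(t)$ would be $t$-independent, contradicting Theorem \ref{KnownPropertiesOfL2Alexander}\,(5),(7). All of the $t$-dependence sits in the chain maps of the sequence, namely in group elements $\gamma$ crossing $S$ with $\phi(\gamma)>0$, and your ``cancellation up to an explicit factor accounting for the $\phi$-twist'' is precisely the unproven core. The paper instead reduces to closed $N$ by doubling, replaces $S$ by a weighted surface $\Sigma$ with $N\bb\Sigma$ connected, and builds an explicit CW structure in which both $\tautwo(N,\phi)(t)$ and $\tautwo(N\bb\Sigma,\Sigma_-)$ appear as regular Fuglede--Kadison determinants of block matrices over $\Z[\pi_1(N\bb\Sigma)]\subset\Z[\ker\phi]$ that differ only in one diagonal block $D(t)=\kappa(\phi,t)D(1)$ with $D(1)$ $\phi$-positive.

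Two further points. The convergence criterion (Corollary \ref{ConvergenceOfFugledeKadison}) is applied at $t\to 0^+$, where the $\phi$-positive block tends to $0$ in norm, and its hypotheses are algebraic: the unperturbed matrix lies over $\C[\ker\phi]$ and the perturbation is $\phi$-positive. It is not a statement about a ``spectrally dominant'' diagonal block with off-diagonal decay in $t^{-1}$, and verifying it in that form would not follow from the theorem; your $t\to\infty$ version must be recast in the $\phi$-positive form (for instance by working with $-\phi$), and the hard direction is the lower bound on the determinant, obtained from multiplicative convexity rather than any semicontinuity. Finally, once one representative of $\tautwo(N,\phi)(t)$ is shown to have a positive finite limit, its identification with $C(N,\phi)$ uses the asymptotic monomiality of Theorem \ref{KnownPropertiesOfL2Alexander}\,(7); your closing remark about the $t^r$ indeterminacy is the right instinct, but it, together with the reduction to closed $N$ and to a connected cut manifold (needed so the block bookkeeping takes place over a single group), has to be carried out explicitly.
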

    The right hand side in the above equality is the relative $L^2$-torsion of the pair $(N\bb S,S_-)$ which has implications in sutured manifold theory \cite{herrmann2023sutured}. Theorem \ref{LeadingCoefficientEqualsRelativeTorsion} was conjectured by Ben Aribi, Friedl and Herrmann in \cite{ben2022leading} motivated by a similar property of the classical Alexander polynomial: the Alexander polynomial $A(t)$ of a knot $K$ has degree no greater than two times the genus of $K$, and if the equality holds, then the leading coefficient of $A(t)$ equals the order of $H_1(X_K\bb S,S_-)$, where $X_K$ is the knot exterior and $S$ is a minimal genus Seifert surface. The trick here is that the Seifert matrix $V$ is invertible and whose determinant is the common value of the leading coefficient and the order of $H_1(X_K\bb S,S_-)$. For the $L^2$-Alexander torsion, a similar phenomenon occurs that there is a matrix $V$ whose Fuglede--Kadison determinant equals the $L^2$-torsion $\tautwo(N\bb S,S_-)$, and $V$ is the limiting matrix of a family $V(t)$ whose Fuglede--Kadison determinant equals $\tautwo(N,\phi)(t)$ when $t<1$. The main difficulty is that the Fuglede--Kadison determinant is in general not continuous. We solve this problem by developing a criterion for the convergence of Fuglede--Kadison determinants (see Theorem \ref{ConvergenceOfFugledeKadison}). 

It is natural to investigate the topological meaning of the leading coefficient $C(N,\phi)$. Liu pointed out in \cite{liu2017degree} that for a primitive class $\phi$, the leading coefficient $C(N,\phi)$ might contain information about the guts if one decomposes $N$ along a maximal collection of non-parallel norm-minimizing surfaces dual to $\phi$. In the terminology of Agol and Zhang \cite{agol2022guts} on the guts of a cohomology class, we can show that the leading coefficient $C(N,\phi)$ is determined by the guts of $\phi$, which is a sutured manifold associated to $N$ and $\phi$, denoted by
\[\Gamma(\phi):=(\Gamma(\phi),R_+ \Gamma(\phi), R_-\Gamma(\phi),\gamma(\Gamma(\phi))).\]
We will prove the following:

    \begin{thm}\label{LeadingCoefficientEqualsGuts}
        Let $N$ be a connected, orientable, irreducible, compact 3-manifold with empty or toral boundary. Then for any primitive $\phi\in H^1(N;\Z)$, we have
        \[
            C(N,\phi)=\tautwo(\Gamma(\phi), R_-\Gamma(\phi)).
        \]
    \end{thm}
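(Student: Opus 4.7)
The plan is to combine Theorem \ref{LeadingCoefficientEqualsRelativeTorsion} with the decomposition of a norm-minimizing surface complement into guts and window developed by Agol and Zhang \cite{agol2022guts}. Since $\phi$ is primitive, I can choose $S$ to be a maximal collection of pairwise non-parallel Thurston norm-minimizing surfaces dual to $\phi$. The sutured manifold $(N\bb S, S_-\cup S_+)$ then admits a JSJ-type decomposition along a canonical family of essential annuli and tori into the guts $\Gamma(\phi)$ and the window $W(\phi)$, the latter being a disjoint union of sutured $I$-bundles and sutured solid tori with product fibrations compatible with $\phi$. Theorem \ref{LeadingCoefficientEqualsRelativeTorsion} reduces the goal to showing
\[
\tautwo(N\bb S, S_-) \;=\; \tautwo(\Gamma(\phi), R_-\Gamma(\phi)).
\]

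Next I would establish a sum formula for relative $L^2$-torsion along the system $T = \Gamma(\phi) \cap W(\phi)$ of $\pi_1$-injective annuli and tori. Each component of $T$ has virtually abelian (amenable) fundamental group and zero Euler characteristic, so the standard Mayer--Vietoris machinery for $L^2$-torsion (in the spirit of L\"uck's monograph and the sum formula of Dubois--Friedl--L\"uck) makes $T$ contribute trivially. This produces the factorization
\[
\tautwo(N\bb S, S_-) \;=\; \tautwo(\Gamma(\phi), R_-\Gamma(\phi)) \cdot \tautwo(W(\phi), W(\phi)\cap S_-).
\]
It then remains to verify that the window contribution equals $1$. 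By construction, $W(\phi)\cap S_-$ is exactly one of the two horizontal boundary components in each $I$-bundle (or product solid torus) component of $W(\phi)$, so each such relative pair admits an obvious deformation retraction onto the opposite horizontal boundary and is $L^2$-acyclic. For twisted $I$-bundle components I would pass to the orientation double cover and use multiplicativity of $L^2$-torsion under finite covers to reduce to the trivial case; taken together these yield $\tautwo(W(\phi), W(\phi)\cap S_-) = 1$.

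The main obstacle will be making the sum formula precise with the correct normalization. The relative $L^2$-torsion on the left is computed with respect to the von Neumann algebra of the ambient group $\pi_1(N)$, whereas the natural torsions of $\Gamma(\phi)$ and of each component of $W(\phi)$ live over strictly smaller group von Neumann algebras. Bridging the two requires the induction/restriction formalism together with $\pi_1$-injectivity of all decomposition pieces, as well as verification that every chain complex involved is weakly acyclic and of determinant class. A secondary technical point is to check, using the Agol--Zhang characterization of $W(\phi)$ as the characteristic sutured $I$-pair of $(N\bb S, S_\pm)$, that its intersection with $S_-$ really is the full horizontal boundary rather than a proper subset; this is what makes the window pairs product-like and hence acyclic. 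Once these pieces are in place, combining the factorization with the triviality of the window contribution proves the theorem.
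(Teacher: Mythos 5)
Your first step matches the paper (apply Theorem \ref{LeadingCoefficientEqualsRelativeTorsion} to a facet surface; note only that a facet surface is dual to $n\phi$, not $\phi$, so you need the dilatation invariance $C(N,\phi)=C(N,n\phi)$ of Theorem \ref{PropertiesOfLeadingCoefficient}\,(1), which is a trivial fix). The genuine gap is in the second half. The guts $\Gamma(\phi)$ is \emph{defined} here (following Agol--Zhang) as the result of decomposing $N\bb F$ along a \emph{maximal collection of product disks and product annuli} and discarding the product sutured components --- not as the complement of a characteristic window cut along ``essential annuli and tori.'' Your argument silently substitutes the characteristic sutured $I$-bundle decomposition for the definitional one, and the identification of the two (that the complement of your window, cut along its frontier, is equivalent as a sutured manifold to the guts in the sense used in this paper) is a nontrivial piece of sutured manifold theory that you neither prove nor cite. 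Without it, your factorization computes the torsion of the wrong object. The paper avoids this entirely by working with the definitional decomposition: it proves that a single sutured decomposition along a product disk or product annulus preserves $\tautwo(M,R_-)$ (Lemma \ref{CuttingAlongProductAnnuli}, via the gluing formula of Lemma \ref{BasicOfRelativeTorsion}\,(3) with intersection pieces $(C,\alpha)$ or $(C,\beta)$ of torsion $1$, after an isotopy that pushes the two copies of $C$ into the suture so the relative pairs match), and then uses Lemma \ref{BasicOfRelativeTorsion}\,(2) to discard the product pieces.

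Two further points would need repair even within your framework. First, your window is allowed to contain sutured solid torus components; for these the ``obvious deformation retraction onto the opposite horizontal boundary'' does not exist, and neither does the twisted-$I$-bundle double-cover trick apply, so $\tautwo(W,W\cap S_-)=1$ is unjustified there (you would need a separate amenability/determinant-class argument, including a proof of $L^2$-acyclicity of that pair, which can fail to be of product type). Second, your sum formula needs the same bookkeeping the paper does in Figure 1: after cutting, $R_-$ of the pieces is not literally $S_-$ intersected with the piece --- one must isotope the new sutures (absorbing the copies of the decomposition surface) so that the relative pairs appearing in Lemma \ref{BasicOfRelativeTorsion}\,(3) are exactly the pairs whose torsions you want to compare. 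You flag this as a ``normalization'' issue but it is where the actual work lies; as written, the proposal does not close it.
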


Agol and Zhang proved that under mild topological conditions, the guts $\Gamma(\phi)$ is an invariant of the open Thurston cone that contains $\phi$ (see \cite[Theorem 1.2]{agol2022guts}). Combining their result and a convexity argument relying on the author's previous work \cite{duan2022positivity}, we can prove the following:

    \begin{thm}\label{InvarianceOfLeadingCoefficientOpenCone}
        Let $N$ be a connected, orientable, irreducible, compact 3-manifold with empty or toral boundary. Then for each open Thurston cone $\mathcal C$ of $H^1(N;\R)$, the leading coefficient $C(N,\phi)$ is constant for all classes $\phi\in \mathcal C$.
    \end{thm}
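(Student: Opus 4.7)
The plan is to establish constancy of $C(N,\phi)$ on $\mathcal{C}$ in three stages: first on the dense subset of primitive integral classes, then on all rational classes via a scaling identity, and finally on real classes via a continuity argument that rests on convexity estimates drawn from \cite{duan2022positivity}.

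\emph{Stage 1 (primitive integral classes).} Fix a primitive $\phi \in \mathcal{C} \cap H^1(N;\Z)$. Theorem~\ref{LeadingCoefficientEqualsGuts} identifies $C(N,\phi) = \tautwo(\Gamma(\phi), R_-\Gamma(\phi))$, and \cite[Theorem 1.2]{agol2022guts} asserts that the guts $\Gamma(\phi)$ depends only on the open Thurston cone containing $\phi$. So $C(N,\phi)$ is constant on the primitive integral classes in $\mathcal{C}$, modulo checking that the mild topological hypotheses of Agol--Zhang hold in the present setting.

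\emph{Stage 2 (rational classes).} For any $\phi \in H^1(N;\Z)$ and $k \in \Z_{>0}$, I claim $C(N, k\phi) = C(N,\phi)$: passing from $\phi$ to $k\phi$ substitutes $t \mapsto t^k$ in $\tautwo(N,\phi)(t)$, while the degree $x_N(\phi)$ also scales by $k$, so the two effects cancel in the leading coefficient. Any rational class $\phi \in \mathcal{C}$ can be written as $\phi = (1/q)\psi$ with $\psi \in \mathcal{C}\cap H^1(N;\Z)$ primitive, and combining with Stage~1 yields that $C(N,\cdot)$ is constant on $\mathcal{C}\cap H^1(N;\Q)$.

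\emph{Stage 3 (real classes).} Since the rational classes are dense in the open cone $\mathcal{C}$, it suffices to prove that $\phi \mapsto \log C(N,\phi)$ is continuous on $\mathcal{C}$. I would extract continuity from a convexity statement: the uniform positivity estimates for $\tautwo(N,\phi)(t)$ established in \cite{duan2022positivity} should be repackaged to show that $\log C(N,\phi)$ is convex on $\mathcal{C}$. A convex function on an open convex subset of the finite-dimensional vector space $H^1(N;\R)$ is automatically continuous, and a continuous function that is constant on a dense subset of a connected open set is constant throughout.

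The main obstacle is the convexity input in Stage~3: translating the analytic positivity bounds of \cite{duan2022positivity}, which control $\tautwo(N,\phi)(t)$ as $\phi$ varies, into a bona fide convexity statement for $\log C(N,\cdot)$ on each open Thurston cone is the substantive content of the argument. Stages~1 and~2 are essentially bookkeeping; once convexity is secured, density and continuity combine formally to give constancy on $\mathcal{C}$.
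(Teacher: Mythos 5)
Your overall architecture matches the paper's (guts plus Agol--Zhang to control the values at rational classes, then a convexity/continuity argument to spread constancy over the whole cone), but the step you yourself flag as ``the substantive content'' --- continuity of $\phi\mapsto\log C(N,\phi)$ on $\mathcal C$ via convexity --- is exactly the part you have not supplied, and it is not a routine repackaging of the positivity results of \cite{duan2022positivity}. In the paper this requires: (a) a reduction via the JSJ decomposition to hyperbolic $N$ (using additivity of the Thurston norm to see that classes in one open cone of $N$ restrict to classes in one open cone of each piece), which is also what makes the non-degeneracy hypothesis of Agol--Zhang available; (b) an explicit formula $(\tautwo(N,\phi)(t))^d \doteq \rdet{G}(\kappa(\phi,t)A)/\max\{1,t^{\phi(h)}\}^2$ valid simultaneously for all $\phi$, obtained through virtual fibering, a pseudo-Anosov fixed-point argument and Fried's description of fibered cones, arranged so that $\phi(h)\geqslant 0$ on $\mathcal C$; and (c) a convex-analysis lemma exploiting that the two asymptotic degrees of $\tautwo(N,\phi)$ differ by $x_N(\phi)=1$ on the Thurston face, because the correction term $\max\{1,t^{\phi(h)}\}^2$ destroys convexity globally and one only gets convexity of the relevant two-variable function on two quadrants. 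Note also that $\log C(N,\cdot)$ is proved convex only on the face $\{x_N=1\}$ and extended by dilatation invariance, and that an open Thurston cone is generally \emph{not} an open subset of $H^1(N;\R)$, so ``convex on an open convex set hence continuous'' must at least be phrased in the relative topology of the cone's span. Without (a)--(c) your Stage~3 is an assertion, not a proof.

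A secondary inaccuracy: \cite[Theorem 1.2]{agol2022guts} does not say the guts depend only on the open cone; it requires non-degenerate Thurston norm and the hypothesis that the endpoints of a segment through the two classes are not in opposite orientations on any boundary torus. Because that hypothesis can fail for a given pair, the paper extracts from it only \emph{finiteness} of the equivalence classes of guts of primitive classes in $\mathcal C$ (via a subsequence argument), not constancy; constancy of $C(N,\phi)$ then follows from finiteness together with continuity. So your Stage~1 claims more than the cited theorem gives, though finiteness would in fact suffice for your strategy once Stage~3 is genuinely established; your Stage~2 (dilatation invariance) is fine.
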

    
    As a simple corollary, for fixed $N$ there are only finitely many possible values for the leading coefficient $C(N,\phi)$, and if $\phi$ is in the closure of the unique open Thurston cone containing $\psi$, then $C(N,\phi)\geqslant C(N,\psi)$ (see Corollary \ref{Subordination}).

    The organization of this paper is as follows: in Section \ref{SectionPreliminaries} we review basic notions and collect known results in $L^2$-torsion theory, especially for 3-manifolds. In Section \ref{SectionLeadingCoefficientRelativeTorsion} we give a criterion for the convergence of Fuglede--Kadison determinants and prove Theorem \ref{LeadingCoefficientEqualsRelativeTorsion}. In Section \ref{SectionGutsAndLeadingCoefficient} we review sutured manifold theory and give the proofs of Theorem \ref{LeadingCoefficientEqualsGuts} and Theorem \ref{InvarianceOfLeadingCoefficientOpenCone}.

    \subsection{Acknowledgment}
The author wishes to thank his advisor Yi Liu for guidance and many conversations. The author is indebted to the referee for many detailed suggestions which greatly improves this paper.

\section{Preliminaries}\label{SectionPreliminaries}
For most part of this section, the reader is referred to \cite{luck2002l2} and \cite{ben2022leading} for more details.
\subsection{Hilbert modules and Fuglede--Kadison determinants}

Let $G$ be a group. We form the following Hilbert space 
\[
    l^2(G)=\Big\{\sum_{g\in G}c_g\cdot g\ \Big|\ c_g\in\C,\  \sum_{g\in G}|c_g|^2<\infty\Big\}
\]
with inner product
\[
    \Big\langle\sum_{g\in G}c_g\cdot g,\sum_{g\in G}d_g\cdot g\Big\rangle = \sum_{g\in G} c_g\overline{d_g}.
\]
This Hilbert space has a natural left isometric $G$-action by left multiplication. The ajoint of an element $v=\sum_{g\in G} c_g\cdot g$ in $l^2(G)$ is defined as
\[
    v^*:=\sum_{g\in G} \overline{c_g}\cdot g^{-1}.
\]
Let $\mathcal NG$ be the \textit{group von Neumann algebra of $G$} which consists of all bounded operators on $l^2(G)$ that commutes with the left $G$-action. Any such operator is just the right multiplication by its value on the identity element $e\in G$, so we can identify
$\mathcal NG$ with all elements $x\in l^2(G)$ such that right multiplication with $x$ defines a bounded linear operator on $l^2(G)$ \cite[Definition 2.13]{Kielak2019RFRSGroupsandVirtualFibering}. In particular, $\mathcal NG$ includes the complex group ring $\C G$. The \textit{von Neumann trace} $\operatorname{tr}_{\mathcal NG}:\mathcal NG\ra \C$ is a linear functional sending $x\in \mathcal NG$ to $\langle x,e\rangle\in\C$. Similarly, the von Neumann trace of a square matrix $A\in M_{p,p}(\mathcal NG)$ is the sum of the von Neumann traces of the diagonal entries.

A \textit{finitely generated Hilbert $\mathcal NG$-module} is a Hilbert space with a linear isometric $G$-action, such that it admits a $G$-equivariant linear embedding into a direct sum of finitely many copies of $l^2(G)$. In particular, the Hilbert module $l^2(G)^n$ with the natural isometric left $G$-action is called the \textit{regular Hilbert $\mathcal NG$-module of rank $n$}. A \textit{morphism} between two Hilbert $\mathcal NG$-modules is a bounded $G$-equivariant map.

Let $V=l^2(G)^n$ and $W=l^2(G)^m$ be two regular Hilbert $\mathcal NG$-modules and $f:V\ra W$ be a morphism, then we can identify $f$ with a matrix $A\in M_{n,m}(\mathcal NG)$. On the other hand, for any matrix $A\in M_{n,m}(\mathcal NG)$, the right multiplication defines a bounded $G$-equivariant operator $R_A:l^2(G)^n\ra l^2(G)^m$. The \textit{Fuglede--Kadison determinant} of a matrix $A\in M_{n,m}(\mathcal NG)$ is denoted by $\det_{\mathcal NG} A$, which is a real number in $[0,+\infty)$. We say $A$ is \textit{of determinant class} if $\det_{\mathcal NG} A>0$.

If $A\in M_{n,n}(\mathcal NG)$ is a square matrix, the \textit{regular Fuglede--Kadison determinant} of $A$ is defined to be
\[
    \rdet G(A):=\left\{
    \begin{array}{ll}
    \operatorname{det}_{\mathcal{N}G}(A)     & \text{if $A$ is injective of determinant class,}  \\
    0     &  \text{otherwise.}
    \end{array}\right.
\]
The regular Fuglede--Kadison determinant satisfies many properties similar to the ordinary determinant for square matrices. The following properties follow easily from \cite[Theorem 3.14]{luck2002l2}.

\begin{lem}\label{BasicsOfRegularFugledeKadison}
    Let $G$ be a group and let $n,m$ be positive integers. Then the following holds:
    \begin{itemize}
        \item [(1)] Let $A,B\in M_{n,n}(\mathcal NG)$, then $\rdet G(AB)=\rdet G(A)\cdot \rdet G(B)$.

        \item [(2)] Let $A\in M_{n,n}(\mathcal NG)$, $B\in M_{n,m}(\mathcal NG)$ and $C\in M_{m,m}(\mathcal NG)$, then
        \[\rdet G
        \left(\begin{array}{cc}
            A & B \\
            0 & C 
        \end{array}\right)=\rdet G(A)\cdot \rdet G(C).
        \]

        \item[(3)]  Let $A\in M_{n,n}(\mathcal NG)$, then $\rdet G(A)=\rdet G(A^*)$.

        \item[(4)] Let $i:H\ra G$ be an injective group homomorphism and $A\in M_{n,n}(\mathcal NH)$, then \[\rdet H(A)=\rdet G(i(A)).\]
    \end{itemize}
\end{lem}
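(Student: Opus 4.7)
My plan is to deduce each of the four properties from the corresponding statements for the standard Fuglede--Kadison determinant in \cite[Theorem 3.14]{luck2002l2}, with careful attention to the injectivity and determinant class conditions that distinguish the regular version. A key observation used throughout is that for a square matrix $A\in M_{n,n}(\mathcal NG)$, the von Neumann dimensions of $\ker R_A$ and of the orthogonal complement of $\overline{\im R_A}$ coincide; in particular, $A$ is injective if and only if $A^*$ is, and if and only if $R_A$ has dense image.

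For part~(1), when both $A$ and $B$ are injective and of determinant class, the composition $R_{AB}=R_B\circ R_A$ is again injective while $R_A$ has dense image, so the multiplicativity from \cite[Theorem 3.14]{luck2002l2} yields $\det_{\mathcal NG}(AB)=\det_{\mathcal NG}(A)\det_{\mathcal NG}(B)>0$. In the degenerate cases both sides must vanish, and I will check this case by case: if $A$ is not injective, the inclusion $\ker R_A\subseteq\ker R_{AB}$ shows $AB$ is not injective; if $B$ is not injective, then $\overline{\im R_B}$ has von Neumann dimension strictly less than $n$ by the dimension equality, and since $\overline{\im R_{AB}}\subseteq\overline{\im R_B}$, the same holds for $AB$, which is therefore not injective; finally, if $A,B$ are both injective but one has vanishing $\det_{\mathcal NG}$, multiplicativity still applies and forces $\det_{\mathcal NG}(AB)=0$.

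For part~(2), I would use the factorization
\[
\begin{pmatrix} A & B \\ 0 & C \end{pmatrix}=\begin{pmatrix} I & 0 \\ 0 & C \end{pmatrix}\begin{pmatrix} I & B \\ 0 & I \end{pmatrix}\begin{pmatrix} A & 0 \\ 0 & I \end{pmatrix}.
\]
The outer two factors have regular Fuglede--Kadison determinants $\rdet{G}(C)$ and $\rdet{G}(A)$ respectively, since block-diagonal matrices decompose as orthogonal direct sums of Hilbert $\mathcal NG$-modules. The unipotent middle factor has regular Fuglede--Kadison determinant one, a standard statement for nilpotent perturbations of the identity that follows from $\log(I+N)=N$ together with $\operatorname{tr}_{\mathcal NG}(N)=0$ when $N^2=0$ and $N$ has zero diagonal. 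Two applications of part~(1) then complete the proof. For part~(3), the equality $\det_{\mathcal NG}(A)=\det_{\mathcal NG}(A^*)$ is listed in \cite[Theorem 3.14]{luck2002l2}, and the dimension observation from the first paragraph shows that $A$ and $A^*$ are simultaneously injective and of determinant class. For part~(4), the inclusion $i\colon H\hookrightarrow G$ makes $\mathcal NH$ a trace-preserving subalgebra of $\mathcal NG$, so the spectral density functions and Fuglede--Kadison determinants of $A$ agree in the two settings; moreover, $R_A$ on $l^2(G)^n$ decomposes as an orthogonal Hilbert direct sum of copies of its action on $l^2(H)^n$ indexed by $H$-cosets, from which injectivity transfers as well.

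The main obstacle I anticipate is the case analysis in part~(1), namely verifying that any failure of injectivity or determinant class for either $A$ or $B$ propagates to the product $AB$, so that both sides of the multiplicativity identity remain zero.
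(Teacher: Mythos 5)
Your proposal is correct and follows essentially the same route as the paper, which simply derives all four statements from \cite[Theorem 3.14]{luck2002l2}; your added bookkeeping (kernel/closure-of-image dimension count for square matrices, the case analysis when injectivity or determinant class fails, and the block factorization for (2)) is exactly the ``follows easily'' content the paper leaves implicit. No gaps: the degenerate cases in (1) propagate to $AB$ as you argue, and the unipotent factor in (2) indeed has regular Fuglede--Kadison determinant $1$ (alternatively by applying \cite[Theorem 3.14]{luck2002l2} to the triangular block matrix directly, or by Lemma \ref{CarayFarberMathai}).
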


The regular Fuglede--Kadison determinant satisfies a weak form of continuity. The following lemma can be found in \cite[Lemma 3.1]{liu2017degree}.

\begin{lem}\label{ContinuityOfFugledeKadison}
If a sequence of $p\times p$ matrices $\{A_n\}$ over $\mathcal{N}G$ converges to $A\in M_{p,p}(\mathcal NG)$ in norm topology, then
    \[
    \limsup_{n\ra \infty} \rdet{G} A_n \leqslant \rdet G A.
    \]
Moreover, if $A$ is a positive operator, then
    \[
    \lim_{\epsilon\ra 0^+} \rdet G (A+\epsilon I)=\rdet G A.
    \]
\end{lem}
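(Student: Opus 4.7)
The plan is to base both claims on the spectral representation
\[
\ln \det_{\mathcal NG}(A) = \int_{0^+}^\infty \ln \lambda \, dF_{|A|}(\lambda),
\]
where $|A| = \sqrt{A^* A}$ and $F_{|A|}(\lambda) = \operatorname{tr}_{\mathcal NG}(\chi_{[0,\lambda]}(|A|))$ is the spectral density function, with the convention that $\det_{\mathcal NG}(A) = 0$ when the integral diverges to $-\infty$. The two statements then reduce to a careful interchange of limits, combining norm continuity of the functional calculus on a fixed bounded interval with the monotone approximation $\ln \lambda = \lim_{\epsilon \to 0^+} \ln(\lambda + \epsilon)$.

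For part (1), since $\{A_n\}$ converges in norm, it is eventually contained in a fixed norm ball, and continuity of $B \mapsto |B|$ in norm topology on bounded operators yields $|A_n| \to |A|$ in norm as well. For each fixed $\epsilon > 0$, the function $\ln(\cdot + \epsilon)$ is continuous and bounded on $[0, C]$ for any $C$ exceeding $\sup_n \|A_n\|$, so continuity of the von Neumann trace against continuous functional calculus gives $\operatorname{tr}_{\mathcal NG} \ln(|A_n| + \epsilon I) \to \operatorname{tr}_{\mathcal NG} \ln(|A| + \epsilon I)$. Combined with the pointwise bound $\ln \lambda \leqslant \ln(\lambda + \epsilon)$, this yields
\[
\ln \det_{\mathcal NG}(A_n) \leqslant \operatorname{tr}_{\mathcal NG} \ln(|A_n| + \epsilon I).
\]
Taking $\limsup$ in $n$ and then $\epsilon \to 0^+$ on the right (monotone convergence over the spectral measure of $|A|$) delivers $\limsup_n \det_{\mathcal NG}(A_n) \leqslant \det_{\mathcal NG}(A)$. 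Since $\rdet{G}(B) \leqslant \det_{\mathcal NG}(B)$ for every $B$, the inequality passes to the regular determinant.

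For part (2), positivity of $A$ makes $A + \epsilon I$ injective and positive definite for every $\epsilon > 0$, so $\rdet{G}(A + \epsilon I) = \det_{\mathcal NG}(A + \epsilon I)$, and $|A + \epsilon I| = A + \epsilon I$ gives
\[
\ln \rdet{G}(A + \epsilon I) = \int_0^\infty \ln(\lambda + \epsilon) \, dF_A(\lambda).
\]
The decomposition of this integral into the atom at $\lambda = 0$ and its complement is the delicate step: on $(0, \infty)$ monotone convergence yields the limit $\ln \det_{\mathcal NG}(A)$, while the atom contribution $F_A(\{0\}) \ln \epsilon$ is either identically zero (when $F_A(\{0\}) = 0$, so $A$ is $L^2$-injective and $\rdet{G}(A) = \det_{\mathcal NG}(A)$) or tends to $-\infty$ (forcing the limit to be $0 = \rdet{G}(A)$). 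This atom-at-zero analysis is the main technical obstacle and is precisely the reason the Fuglede--Kadison determinant is only upper semicontinuous in general.
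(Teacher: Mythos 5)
Your part (2) is essentially correct, and note that the paper itself does not prove this lemma but quotes it from \cite[Lemma 3.1]{liu2017degree}; the problem is a genuine gap at the end of your part (1). Throughout part (1) your $\det_{\mathcal NG}$ is L\"uck's kernel-ignoring determinant (your spectral integral starts at $0^+$), and what your argument actually yields is $\limsup_n \rdet G A_n\leqslant \det_{\mathcal NG}(A)$. The lemma asserts the bound by $\rdet G A$, which is strictly smaller precisely when $A$ is non-injective but of determinant class: there $\rdet G A=0$ while $\det_{\mathcal NG}(A)>0$. The inequality $\rdet G(B)\leqslant \det_{\mathcal NG}(B)$ goes the wrong way for your purpose: it allows you to shrink the left-hand side, not the right-hand side, so ``the inequality passes to the regular determinant'' is not justified, and the entire content of the non-injective case --- namely that $\rdet G A_n\to 0$ whenever $\ker A\neq 0$ --- is missing. (A smaller point: the displayed inequality $\ln\det_{\mathcal NG}(A_n)\leqslant\operatorname{tr}_{\mathcal NG}\ln(|A_n|+\epsilon I)$ can fail when $A_n$ has a kernel, because the trace then contains the very negative term $F_{|A_n|}(0)\ln\epsilon$; it does hold with $\rdet G A_n$ on the left, which is all you need, since non-injective $A_n$ contribute $0$ to the $\limsup$.)

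The fix stays entirely inside your framework: do not discard the atom of the spectral measure of $|A|$ at $0$ when letting $\epsilon\to0^+$. From $\limsup_n\ln\rdet G A_n\leqslant\operatorname{tr}_{\mathcal NG}\ln(|A|+\epsilon I)$ for every fixed $\epsilon>0$, write
\[
\operatorname{tr}_{\mathcal NG}\ln(|A|+\epsilon I)=F_{|A|}(0)\,\ln\epsilon+\int_{0^+}^{\infty}\ln(\lambda+\epsilon)\,dF_{|A|}(\lambda),
\]
where $F_{|A|}(0)=\dim_{\mathcal NG}\ker A$. As $\epsilon\to0^+$ the second term decreases to $\ln\det_{\mathcal NG}(A)$ by monotone convergence (it is bounded above by $p\ln(\|A\|+1)$), while the first term is $0$ if $A$ is injective and tends to $-\infty$ if $F_{|A|}(0)>0$. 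Hence the limit of the right-hand side equals $\ln\rdet G A$ in every case, with the convention $\ln 0=-\infty$, which is exactly the claimed inequality. Equivalently, one can argue as Liu does via the comparison of spectral density functions $F_{|A|}(\lambda)\leqslant F_{|A_n|}(\lambda+\|A_n-A\|)$: a kernel of $A$ of positive von Neumann dimension forces spectral mass of $|A_n|$ into $[0,\|A_n-A\|]$ and therefore $\ln\rdet G A_n\to-\infty$. This is the same atom-at-zero analysis you carry out carefully in part (2); it is needed in part (1) as well.
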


Let $\phi\in H^1(G;\R)$ be any real cohomology class and $t>0$ be any positive real number, define the \textit{Alexander twist with respect to $(\phi,t)$} as a ring homomorphism $\kappa(\phi,t):\C G\ra \C G$, with
\[
    \kappa(\phi,t)(g):=t^{\phi(g)}g
\]
for $g\in G$ and then extend the definition to all of the group ring linearly. When $A$ is a matrix over $\C G$, we define $\kappa(\phi,t)A$ by performing the Alexander twist on all entries of $A$. It is easy to verify that $\kappa(\phi,t)$ is a homomorphism of the matrix algebra $M_{m,n}(\C G)$.

We say a function $f:\R_+\ra [0,+\infty)$ is \textit{multiplicatively convex}, if for any $t_1,t_2>0$ and any $\theta\in(0,1)$ we have 
\[f(t_1)^\theta\cdot f(t_2)^{1-\theta}\geqslant f(t_1^\theta\cdot t_2^{1-\theta}).\] When $G$ is a finitely generated, residually finite group, the $L^2$-Alexander twist of a square matrix over $\C G$ enjoys a nice continuity property when taking regular Fuglede--Kadison determinants.
The following theorem is from \cite[Theorem 5.1]{liu2017degree}.

\begin{thm}\label{LiuConvexityOfFugledeKadison}
    Let $G$ be a finitely generated, residually finite group. For any square matrix $A$ over $\C G$ and any cohomology class $\phi\in H^1(G;\R)$, the function
    \[
        t\longmapsto \rdet G (\kappa(\phi,t)A),\quad t\in \R^+
    \]
is either constantly zero or multiplicatively convex (and in particular every where positive). 
\end{thm}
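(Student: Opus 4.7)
The plan is to prove multiplicative convexity by approximating the Fuglede--Kadison determinant via finite quotients of $G$, where the twisted determinant becomes a complex polynomial in $t$ whose logarithm is convex in $\log t$ by subharmonicity of $\log|z|$; the positivity/zero dichotomy will then fall out of standard properties of pointwise limits of convex functions.

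First I would reduce to the case where $\phi \in H^1(G;\Z)$. Multiplicative convexity is closed under pointwise limits in $t$, and every real class can be approximated by rationals after scaling. The Alexander twist $\kappa(\phi,t) A$ depends continuously on $\phi$ in operator norm, so Lemma \ref{ContinuityOfFugledeKadison} combined with a uniform lower bound on the Fuglede--Kadison determinants of the approximating twists suffices to pass to the limit. For integral $\phi$, I would invoke residual finiteness to choose a nested chain $G \supseteq G_1 \supseteq G_2 \supseteq \cdots$ of finite-index normal subgroups with $\bigcap_n G_n = \{e\}$. L\"uck's approximation theorem, in the form extended by Schick to cover complex-coefficient matrices, then gives
\[
\log \rdet{G}(\kappa(\phi,t)A) \,=\, \lim_{n\to\infty} \frac{1}{[G:G_n]}\, \log \bigl| \det \overline{\kappa(\phi,t)A}_n \bigr|,
\]
where $\overline{\kappa(\phi,t)A}_n$ denotes the matrix induced on $\C[G/G_n]^p$ via the regular representation. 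Since $\phi$ is integral, $P_n(t) := \det \overline{\kappa(\phi,t)A}_n$ is an honest complex polynomial in $t$ of bounded degree. Factoring $P_n(t) = c \prod_j (t - r_j)$, the function $\log|P_n(t)|$ is a finite sum of terms $\log|t - r_j|$, each of which is convex in $\log t$ by subharmonicity of $\log|z|$. Convexity is preserved under pointwise limits, so the limit function $\log \rdet{G}(\kappa(\phi,t)A)$ is either finite-valued and convex in $\log t$ (equivalently, $\rdet{G}(\kappa(\phi,t)A)$ is multiplicatively convex and strictly positive) or identically $-\infty$ (equivalently, identically zero), which is exactly the stated dichotomy.

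The hardest step is the approximation identity itself: L\"uck's original theorem addresses integer-entry matrices, and the extension to $\C G$-coefficient matrices requires Schick's refinement plus a uniform determinant-class hypothesis in $t$, which is itself nontrivial to verify. The regular Fuglede--Kadison determinant convention is essential here, as the naive identity can fail for noninjective operators. A further technical point is the density passage from integer to real $\phi$: upper semicontinuity provides only one inequality in the limit, so an additional Mahler-measure estimate is needed to recover the other, which is the most intricate piece of Liu's original argument.
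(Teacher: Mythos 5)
The paper does not prove Theorem~\ref{LiuConvexityOfFugledeKadison} at all: it is quoted from \cite[Theorem 5.1]{liu2017degree}, so your sketch has to stand on its own, and it has two gaps that are fatal rather than technical. The first is the approximation identity on which everything rests. L\"uck's approximation theorem and its refinements by Schick et al.\ give convergence of \emph{normalized Betti numbers} over finite quotients, not of Fuglede--Kadison determinants. For determinants, what is available in general is only a one-sided semicontinuity statement, and even that requires a uniform lower bound on the finite-quotient determinants which comes from integrality of the entries (a nonzero integer determinant has absolute value at least $1$) and is simply unavailable over $\C G$; the full limit formula $\log\rdet{G}(\kappa(\phi,t)A)=\lim_n \tfrac{1}{[G:G_n]}\log|\det\overline{\kappa(\phi,t)A}_n|$ is the (open) approximation conjecture for Fuglede--Kadison determinants, unresolved even for $\Z G$-matrices. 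You flag this as ``nontrivial to verify,'' but it is not a verification issue: it is the whole problem, and no argument for it (let alone one uniform in $t$) is supplied.

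The second gap is independent: even granting the limit formula, the convexity claim for the approximants is false. The function $\log|t-r|$ is convex in $\log t$ only when $r$ does \emph{not} lie on the positive real axis; for $r>0$ one computes $\tfrac{d^2}{ds^2}\log|e^s-r|=-re^s/(e^s-r)^2<0$, with a dip to $-\infty$ at $t=r$. Subharmonicity of $\log|z|$ gives convexity in $\log\rho$ of \emph{circular averages} over $|z|=\rho$, not of the restriction of $\log|P_n|$ to the ray $t>0$. Concretely, for $G=\Z=\langle g\rangle$, $A=1-g$, $\phi(g)=1$ and the quotients $\Z/n$, one gets $P_n(t)=1-t^n$, and $\tfrac1n\log|1-t^n|$ is not convex in $\log t$, although its limit $\log\max\{1,t\}=\log\rdet{\Z}(1-tg)$ is. This example shows that multiplicative convexity cannot be obtained as a pointwise limit of convex finite-dimensional approximants; it must come from a mechanism your sketch never invokes, namely that for integral $\phi$ the map $g\mapsto e^{i\theta\phi(g)}g$ is a trace-preserving $*$-automorphism, so $\rdet{G}(\kappa(\phi,te^{i\theta})A)$ is independent of $\theta$ and the quantity of interest behaves like a circular average of a subharmonic function of a complexified variable --- the kind of input Liu's actual proof exploits. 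Finally, your reduction from real to integral classes uses Lemma~\ref{ContinuityOfFugledeKadison}, which only yields $\limsup\leqslant$; the matching lower bound, which you defer to ``a uniform lower bound'' or ``a Mahler-measure estimate,'' is precisely the hard direction and is nowhere provided. As it stands, the proposal does not prove the theorem.
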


\subsection{$L^2$-Alexander torsions of CW-pairs}\label{SectionTorsionDefinition}
Let $X$ be a finite connected CW-complex with fundamental group $G$ and let $Y$ be a subcomplex of $X$. Let $p:\widehat X\ra X$ be the universal covering of $X$, then $\widehat X$ admits the induced CW-structure coming from $X$. Define $\widehat Y:=p^{-1}(Y)$ to be the subcomplex of $\widehat X$. The natural left $G$-action on $\widehat X$ gives rise to a $\Z G$-module structure for the integral cellular chain complex $C_*(\widehat X,\widehat Y)$. By choosing a lift $\widehat\sigma_i$ for each cell $\sigma_i$ in $X\setminus Y$ to $\widehat X\setminus\widehat Y$, we can identify each module $C_r(\widehat X,\widehat Y)$ with $(\Z G)^{n_r}$, where $n_r$ is the number of $r$-cells in $X\setminus Y$.

For any $\phi\in H^1(X;\R)$ and $t>0$ we can form the chain complex
\[
    C_*^{(2)}(\widehat X,\widehat Y;\phi,t):=l^2(G)\otimes_{\Z G} C_*(\widehat X,\widehat Y)
\]
in which $\Z G$ acts on $l^2(G)$ on the right via the Alexander twist: for any $a\in \Z G$, the action of $a$ on $l^2(G)$ is the right multiplication operator $R_{\kappa(\phi,t)a}$. The connecting homomorphism 
\[
    \partial^{(2)}_r:C_r^{(2)}(\widehat X,\widehat Y;\phi,t)\longrightarrow C_{r-1}^{(2)}(\widehat X,\widehat Y;\phi,t)
\]
is defined to be $\mathrm{id}\otimes_{\Z G} \partial_r$ where $\partial_r:C_r(\widehat X,\widehat Y)\ra C_{r-1}(\widehat X,\widehat Y)$ is the ordinary $\Z G$-linear boundary homomorphism. By our choice of basis, the chain module $C_r^{(2)}(\widehat X,\widehat Y;\phi,t)$ can be identified with $l^2(G)^{n_r}$, the connecting homomorphism $\partial^{(2)}_r$ can be identified with a matrix in $M_{n_r,n_{r-1}}(\C G)$. Let $N$ be the highest dimension of the cells in $X\setminus Y$. We say the chain complex $C_*(\widehat X,\widehat Y;\phi,t)$
\begin{itemize}
    \item [(1)] is \textit{$L^2$-acyclic}, if for all $r$ we have $\ker(\partial^{(2)}_{r})=\overline{\im(\partial^{(2)}_{r+1})}$;

    \item[(2)] is \textit{of determinant class}, if for any $r=1,\ldots,N$, the Fuglede--Kadison determinant $\det_{\mathcal NG}\partial^{(2)}_r$ is nonzero.
\end{itemize}

\begin{defn}
    If the chain complex $C_*(\widehat X,\widehat Y;\phi,t)$ is both $L^2$-acyclic and of determinant class, then we define the $L^2$-Alexander torsion of $(X,Y)$ with respect to $(\phi,t)$ by
\[
    \tautwo(X,Y,\phi)(t):=\prod_{r=1}^N (\operatorname{det}_{\mathcal NG}\partial^{(2)}_r)^{(-1)^r},
\]
if either condition fails we define $\tautwo(X,Y,\phi)(t):=0$. 
\end{defn}
When $\phi=0$ the $L^2$-Alexander torsion is irrelevant of $t$ and we write as $\tautwo(X,Y)$. When $Y=\emptyset$ we write $\tautwo(X,Y,\phi)(t)$ as $\tautwo(X,\phi)(t)$. We call $\tautwo(X)$ the \textit{$L^2$-torsion} of $X$ and $\tautwo(X,Y)$ the \textit{relative $L^2$-torsion} of $(X,Y)$.

Since there are different choices of liftings $\widehat\sigma_i$, the $L^2$-Alexander torsion is only well defined up to multiplying by a monomial $t\mapsto t^r$ for some fixed $r\in \R$. We say two functions $f,g:\R_+\ra [0,+\infty)$ are equivalent, denoted as $f(t)\doteq g(t)$, if there is a constant $r\in \R$ such that $f(t)=t^r\cdot g(t)$ for all $t\in \R_+$.

We remark that the $L^2$-Alexander torsion is invariant under simple homotopy equivalences \cite[Theorem 3.93]{luck2002l2}, in particular, it is invariant under cellular subdivisions.

The $L^2$-Alexander torsions can be defined for smooth manifold pairs as follows: let $M$ be a compact connected smooth manifold, possibly with boundary. Let $N$ be a compact smooth submanifold, possibly with boundary and disconnected. We require that either $N$ is a smooth submanifold of $\partial M$ of co-dimension zero, or the embedding $N\hookrightarrow M$ is proper (i.e. $N\cap \partial M=\partial N$). One can therefore find a smooth triangulation of $M$ such that $N$ is a subcomplex of this triangulation (c.f. \cite[Chapter 10]{Munkres1961ElementaryDiff} and  \cite[Chapter 7]{Verona1984StratifiedMappings}). This triangulation is canonical in the sense that any two smooth triangulations of $M$ admit a common subdivision \cite{Whitehead1940OnC1Complexes}. We use this triangulation to view $(M,N)$ as a simplicial (hence CW) pair $(X,Y)$ and define $\tautwo(M,N,\phi)(t):=\tautwo(X,Y,\phi)(t)$ for any $\phi\in H^1(M;\R)$, $t>0$. 

We define the $L^2$-Alexander torsion of a disconnected manifold pair $(M,N)$ to be the product of the $L^2$-Alexander torsion of the components. Precisely, let $M_1,\ldots,M_n$ be the connected components of $M$ and let $N_i:=M_i\cap N,\ i=1,\ldots,n$. For any $\phi\in H^1(M;\R)$ and $t>0$, define 
\[
    \tautwo(M,N,\phi)(t):=\prod_{i=1}^n \tautwo(M_i,N_i,\phi_i)(t),
\]
where $\phi_i\in H^1(M_i;\R)$ is the restriction of $\phi$ to $M_i$.

The following basic properties are useful. The first one follows from the definition, the others are generalizations of \cite[Theorem 3.93]{luck2002l2} and are stated in \cite[Lemma 2.3]{ben2022leading}.

\begin{lem}\label{BasicOfRelativeTorsion}
Let $(M,N)$ be a compact smooth manifold pair such that $\pi_1(M)$ is residually finite, then:
\begin{itemize}
    \item [(1)] For any $\phi\in H^1(M;\R)$ and any $c\in\R$, we have
    \[\tautwo(M,N,c\phi)(t)=\tautwo(M,N,\phi)(t^c).\]

    \item [(2)] If $M=N\times 
 I$ then for any $s\in[0,1]$ we have
 \[
    \tautwo(M,N\times\{s\})=1.
 \]

    \item[(3)] Let $(M,N)=(X,C)\cup (Y,D)$ where $X$ and $Y$ are submanifolds. Suppose any component of $X\cap Y$ is a submanifold of $\partial X$ and $\partial Y$, any component of $C\cap D$ is a submanifold of $\partial C$ and $\partial D$. Assume further that\linebreak $\tautwo(X\cap Y,C\cap D)>0$ and for any component $Z$ of $X\cap Y$ the induced maps $\pi_1(Z)\ra \pi_1(X)$ and $\pi_1(Z)\ra\pi_1(Y)$ are injective. Then
    \[
        \tautwo(M,N)=\tautwo(X,C)\cdot\tautwo(Y,D)\cdot\tautwo(X\cap Y,C\cap D)^{-1}.
    \]\end{itemize}
    
\end{lem}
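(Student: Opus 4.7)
The three parts are standard formal properties of $L^2$-torsion adapted to the $L^2$-Alexander twist setting, and my plan is to reduce each to a known result by unwinding the definitions. For part (1), I would compute directly with the Alexander twist: for every $g\in\pi_1(M)$,
\begin{equation*}
\kappa(c\phi,t)(g)=t^{c\phi(g)}g=(t^c)^{\phi(g)}g=\kappa(\phi,t^c)(g),
\end{equation*}
so the ring homomorphisms $\kappa(c\phi,t)$ and $\kappa(\phi,t^c)$ on $\C\pi_1(M)$ coincide. The twisted boundary matrices, and hence their Fuglede--Kadison determinants, are therefore identical, immediately yielding $\tautwo(M,N,c\phi)(t)=\tautwo(M,N,\phi)(t^c)$.

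For part (2), no cohomology class appears and the twist is trivial, so the claim concerns the ordinary relative $L^2$-torsion. I would equip $N\times I$ with a product CW structure in which $s$ is a vertex of $I$, making $N\times\{s\}$ a subcomplex. The pair $(N\times I,N\times\{s\})$ is simple homotopy equivalent to the trivial pair $(N\times\{s\},N\times\{s\})$ via the obvious collapse of the $I$-direction onto the chosen slice. By the simple homotopy invariance of the $L^2$-Alexander torsion recalled in Section \ref{SectionTorsionDefinition}, this gives $\tautwo(M,N\times\{s\})=1$, the empty-complex convention.

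For part (3), I would set up a Mayer--Vietoris short exact sequence at the chain level. Let $G=\pi_1(M)$ with universal cover $\widehat M$. The injectivity hypothesis on $\pi_1(Z)\hookrightarrow \pi_1(X)$ and $\pi_1(Z)\hookrightarrow \pi_1(Y)$ for every component $Z$ of $X\cap Y$, combined with van Kampen, ensures that the preimages of $X$, $Y$ and $X\cap Y$ in $\widehat M$ are disjoint unions of universal covers of the corresponding pieces, so that their relative cellular chain complexes are free $\Z G$-modules compatibly with the subgroup inclusions. The short exact sequence
\begin{equation*}
0\to C_*(\widehat{X\cap Y},\widehat{C\cap D})\to C_*(\widehat X,\widehat C)\oplus C_*(\widehat Y,\widehat D)\to C_*(\widehat M,\widehat N)\to 0,
\end{equation*}
after tensoring with $l^2(G)$ over $\Z G$, yields a short exact sequence of finitely generated Hilbert $\mathcal NG$-chain complexes. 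Using Lemma \ref{BasicsOfRegularFugledeKadison}(4) to identify regular Fuglede--Kadison determinants computed over subgroups with those over $G$, and invoking the standard multiplicativity of $L^2$-torsion along short exact sequences (L\"uck, Theorem 3.35), I obtain the asserted product formula. The assumption $\tautwo(X\cap Y,C\cap D)>0$ is used precisely to ensure that $L^2$-acyclicity and determinant class propagate through the sequence so that the multiplicativity applies without $0/0$ indeterminacy. The main obstacle is this bookkeeping in part (3): one must verify that the change-of-groups identifications and the determinant-class conditions are compatible with the gluing, so that all three torsions on the right-hand side are positive real numbers and the short-exact-sequence multiplicativity of Fuglede--Kadison determinants applies term by term.
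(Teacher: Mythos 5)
Your argument is correct and follows the same route the paper takes, which simply cites these facts: part (1) is the immediate computation $\kappa(c\phi,t)=\kappa(\phi,t^c)$ from the definition, and parts (2) and (3) are the simple homotopy invariance and the sum formula for $L^2$-torsions (L\"uck, Theorem 3.93, as packaged in Ben Aribi--Friedl--Herrmann, Lemma 2.3), with residual finiteness guaranteeing the determinant-class conditions and Lemma \ref{BasicsOfRegularFugledeKadison}\,(4) handling the change of groups exactly as you describe. Your reconstruction of the Mayer--Vietoris/short-exact-sequence argument behind the cited sum formula is the standard one, so there is nothing genuinely different to compare.
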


\subsection{$L^2$-Alexander torsion of 3-manifolds}
Let $N$ be a compact oriented 3-manifold, for any integral homology class $z\in H_2(N,\partial N)$ we define the \textit{Thurston norm} of $z$ to be
\[
    x_N(z):=\min\left\{\chi_-(\Sigma)\mid \text{$\Sigma$ properly embedded surface with $[\Sigma,\partial \Sigma]=z$}\right\}
\]
where given a compact oriented surface $\Sigma$ with components $\Sigma_1,\ldots,\Sigma_k$ we define its complexity as
\[
    \chi_-(\Sigma):=\sum_{i=1}^k \max\{0,-\chi(\Sigma_i)\}.
\]
Let $\Sigma$ be an oriented properly embedded surface representing $z\in H_2(N,\partial N)$. We say $\Sigma$ is \textit{norm-minimizing} if $\Sigma$ has no null-homologous collection of components, and $\chi_-(\Sigma)=x_N(z)$. We say $\Sigma$ is \textit{taut} if $\Sigma$ is incompressible, has no null-homologous collection of components, and $\Sigma$ has minimal complexity among all properly embedded oriented surfaces representing the homology class $[\Sigma,\partial \Sigma]\in H_2(N,\eta(\partial \Sigma))$. Here $\eta(\partial \Sigma)$ is the tubular neighborhood of $\partial \Sigma$ in $\partial N$. The definitions here follow from \cite{agol2022guts}. When $N$ is irreducible and is not homeomorphic to $S^1\times D^2$, the definitions coincide with the ones in \cite{ben2022leading}.

Thurston proved in \cite{thurston1986norm} that $x_N$ can be extended to a pseudo-norm of the real vector space $H_2(N,\partial N;\R)$ and whose unit ball $B_x(N)$ is a finite convex (possibly non-compact) polyhedron. An \textit{open Thurston cone} is either an open cone formed by the origin and a face of $B_x(N)$, or a maximal connected component of $H_2(N,\partial N;\R)-\{0\}$ on which $x_N$ vanishes. It turns out that $H_2(N,\partial N;\R)-\{0\}$ is the disjoint union of all open Thurston cones. One should note that open Thurston faces or cones are usually not open sets; they are cells in $H_2(N,\partial N;\R)$ whose dimensions can be various.

We often identify $H_2(N,\partial N)$ with $H^1(N)$ by Poincar\'e duality and all the above structures can also be defined on the cohomology group $H^1(N)$ (with integral or real coefficients). A cohomology class $\phi\in H^1(N;\R)$ is called \textit{fibered} if the class is represented by a nowhere vanishing differential 1-form on $N$. An integral class $\phi$ is fibered if and only if $N$ admits a fibration over the circle $p:N\ra S^1$ such that $\phi$ is the pull back class of the fundamental class $[S^1]\in H^1(S^1;\Z)$. Thurston proved that the set of fibered classes is some union of top dimensional open Thurston cones \cite{thurston1986norm}.

When the pseudo-norm $x_N$ becomes a norm then we say that $N$ has \textit{non-degenerate Thurston norm}.

 We remark that the fundamental group of a 3-manifold is residually finite (see \cite{hempel1987residual} and \cite{aschenbrenner20153}), so Lemma \ref{BasicOfRelativeTorsion} applies to 3-manifold pairs. The following theorem collects known facts relating to the $L^2$-Alexander torsion of 3-manifolds.
 
\begin{thm}\label{KnownPropertiesOfL2Alexander}
    Let $N$ be an orientable irreducible compact 3-manifold with empty or toral boundary. Let $\phi\in H^1(N;\R)$ be a real cohomology class.

    \begin{itemize}

    \item [(1)]Let $\operatorname{Vol}(N)$ be the sum of the volumes of the hyperbolic pieces in the JSJ-decomposition of $N$ (c.f. \cite[Theorem 1.7.6]{aschenbrenner20153}), then we have
    \[
        \tautwo(N,\phi)(1)=e^{\frac{\operatorname{Vol}(N)}{6\pi}}.
    \]

    \item[(2)]  If $p:M\ra N$ is a covering of finite degree $d$, then
    \[
        \tautwo(M,p^*\phi)(t)\doteq\tautwo(N,\phi)(t)^d
    \]
    where $p^*\phi$ is the pull back class of $\phi$.

    \item[(3)] If $N$ is obtained from a (possibly disconnected) 3-manifold $M$ via gluing together pairs of incompressible toral boundaries of $M$, then
    \[
        \tautwo(N,\phi)(t)\doteq \tautwo(M,\phi|_M)(t)
    \]
    where $\phi|_M$ is the restriction of $\phi$ to $M$.

    \item [(4)] If $N$ is a graph manifold, then $\tautwo(N,\phi)(t)\doteq \max\{1,t\}^{x_N(\phi)}$.

    \item[(5)] If $\phi$ is a fibered class, then 
    \[
        \tautwo(N,\phi)(t)\doteq
    \begin{cases}
        1 & \text{if\ }t<\frac1{h(\phi)},\\
     t^{x_N(\phi)}& \text{if\ }t>h(\phi)
    \end{cases}
    \]
    where $h(\phi)$ is the entropy of the monodromy.

    \item[(6)] $\tautwo(N,\phi)(t)$ is strictly positive for all $t>0$.

    \item[(7)] $\tautwo(N,\phi)(t)$ is a continuous function of $t$. Moreover, there exists constants $b_1,\ b_2\in\R$ and $C(N,\phi)\geqslant 1$ such that 
    \[
        \lim_{t\ra 0^+}\frac{\tautwo(N,\phi)(t)}{t^{b_1}}= \lim_{t\ra +\infty}\frac{\tautwo(N,\phi)(t)}{t^{b_2}}=C(N,\phi)
    \]
    with $b_2-b_1=x_N(\phi)$.

    \end{itemize}
\end{thm}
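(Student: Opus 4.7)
The plan is to assemble Theorem \ref{KnownPropertiesOfL2Alexander} as a compendium of established results, citing or sketching each item rather than reproving from scratch, since this is positioned as a ``collects known facts'' statement.

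I would dispatch items (1), (2), (3) first, as they all derive from structural properties of Fuglede--Kadison determinants together with the JSJ decomposition. For (1), observe that $\kappa(\phi,1)$ is the identity on $\C G$, so $\tautwo(N,\phi)(1)$ coincides with the ordinary $L^2$-torsion $\tautwo(N)$; then invoke Lück's computation of $L^2$-torsion for hyperbolic pieces, its vanishing on Seifert-fibered pieces, and the trivial contribution from gluing along incompressible tori. For (2), multiplicativity under a finite cover $p\colon M\to N$ of degree $d$ follows from expressing the induced $L^2$-Alexander chain complex of $\widehat M$ as a restriction of scalars of that of $\widehat N$ to a finite-index subgroup, invoking Lemma \ref{BasicsOfRegularFugledeKadison}(4). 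For (3), the gluing formula reduces to Lemma \ref{BasicOfRelativeTorsion}(3) applied along the identified incompressible tori, whose $L^2$-Alexander torsion is trivial (the fundamental group is amenable, so the twisted cellular chain complex of the torus is $L^2$-acyclic with trivial regular determinants).

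For (4), I would combine (3) with an explicit calculation on each Seifert-fibered piece of the JSJ decomposition of a graph manifold: the circle-bundle structure allows one to identify the relevant boundary matrices up to units with factors of the form $(t^a-1)$ whose regular Fuglede--Kadison determinants evaluate to $\max\{1,t\}^a$, and summing contributions via (3) recovers $\max\{1,t\}^{x_N(\phi)}$. For (5), the fibered case is essentially the main theorem of Dubois--Friedl--Lück \cite{dubois2015l2}: on a mapping torus the $L^2$-Alexander chain complex is governed by $t\cdot R_\varphi-\mathrm{id}$ for the monodromy operator $R_\varphi$, and the regimes $t<1/h(\phi)$ and $t>h(\phi)$ correspond to the sectors in which this operator is uniformly invertible after normalization, yielding the stated monomial behavior.

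Items (6) and (7) are the main results of Liu \cite{liu2017degree}. Positivity (6) rests on the approximation of Fuglede--Kadison determinants in residually finite 3-manifold groups together with the deep fact that such groups admit virtually fibered finite covers, which allows one to reduce positivity to the fibered case treated in (5). Granted (6), Theorem \ref{LiuConvexityOfFugledeKadison} gives that $t\mapsto \tautwo(N,\phi)(t)$ is strictly positive and multiplicatively convex, hence continuous; passing to $s\mapsto \log\tautwo(N,\phi)(e^s)$ produces a real-valued convex function on $\R$ which therefore admits linear asymptotes of slopes $b_1\leqslant b_2$, and the identity $b_2-b_1=x_N(\phi)$ is the Friedl--Lück degree formula. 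The main obstacle is positivity (6), since without it the conclusions in (7) become vacuous and the other items lose content under the equivalence $\doteq$; once positivity is granted, the remaining assertions reduce to standard packaging of classical computations and the convexity theorem.
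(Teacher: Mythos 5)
Your proposal takes the same route as the paper: Theorem \ref{KnownPropertiesOfL2Alexander} is not proved there but assembled from the literature, with (1) reduced to the ordinary $L^2$-torsion via $\kappa(\phi,1)=\mathrm{id}$ and L\"uck's computation, (2)--(4) quoted from Dubois--Friedl--L\"uck, (5) from the fibered-class computation, and (6)--(7) from Liu (and L\"uck--Friedl for parts), so as a ``compendium with citations'' your treatment is essentially the paper's.

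Two points in your sketches deserve correction, though neither changes the overall verdict. First, for (5) you attribute the statement wholesale to Dubois--Friedl--L\"uck, but their theorem covers only rational fibered classes, whereas the statement allows arbitrary real $\phi$ (fibered classes form open cones, hence include irrational ones); the real-class case is exactly the content of \cite[Theorem 5.10]{duan2022positivity} and does not follow formally from the rational case without an extra approximation/continuity argument, which your sketch omits. Second, in (7) your logical chain ``convexity of $s\mapsto\log\tautwo(N,\phi)(e^s)$ gives linear asymptotes, and $b_2-b_1=x_N(\phi)$ is the Friedl--L\"uck degree formula'' is not a proof of the stated asymptotics: convexity yields continuity and asymptotic slopes $b_1\leqslant b_2$, but it does not yield that $f(s)-b_is$ converges to a \emph{finite} limit at either end, let alone that the two limits coincide and give a single constant $C(N,\phi)\geqslant 1$; that common leading coefficient at both ends is precisely the nontrivial part of Liu's theorem (and the degree identity for general real classes is Liu's, Friedl--L\"uck having the rational case). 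Since you do cite Liu for (6)--(7) as the source, these are attribution and logical-structure corrections rather than fatal gaps, but as written the sketches would not stand on their own for real classes.
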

The first statement follows from the definition and \cite{luck19992}. The statements (2)--(4) follow from Lemma 5.3, Theorem 5.5, Theorem 1.2 of \cite{dubois2015l2} respectively. The statement of (5) is proved for rational classes in \cite[Theorem 1.3]{dubois2015l2}, and is generalized to real classes in \cite[Theorem 5.10]{duan2022positivity}. The statement (6) is independently proved in \cite[Theorem 1.2]{liu2017degree} and \cite[Theorem 7.7]{luck2018twisting}. The statement (7) is from \cite[Theorem 1.2]{liu2017degree}; when $\phi$ is a rational class, the existence of $b_1, b_2\in \R$ satisfying $b_2-b_1=x_N(\phi)$ is independently proved by \cite[Theorem 0.1]{friedl2019l2}.

 We call $C(N,\phi)$ the \textit{leading coefficient} of the $L^2$-Alexander torsion of $N$ with respect to $\phi$. We briefly call $C(N,\phi)$ the leading coefficient of $\phi$ when there is no confusion caused. Here are some known facts about the leading coefficient.

 \begin{thm}\label{PropertiesOfLeadingCoefficient}
     Let $N$ be a connected, orientable, irreducible, compact 3-manifold with empty or toral boundary.  Let $\phi\in H^1(N;\R)$ be a real cohomology class.

     \begin{itemize}
         \item [(1)] The leading coefficient is dilatation invariant, i.e. $C(N,c\phi)=C(N,\phi)$ for any nonzero real number $c$. Let $\operatorname{Vol}(N)$ be the sum of the volumes of the hyperbolic pieces in the JSJ-decomposition of $N$, then
         \[C(N,0)=e^{\frac{\operatorname{Vol}(N)}{6\pi}}.\]

         \item [(2)] If $p:M\ra N$ is a covering of finite degree $d$, then
    \[
        C(M,p^*\phi)=C(N,\phi)^d
    \]
    where $p^*\phi$ is the pull back class of $\phi$.

        \item[(3)] If $N$ is obtained from a (possibly disconnected) 3-manifold $M$ via gluing together pairs of incompressible toral boundaries of $M$, then
    \[
        C(N,\phi)=C(M,\phi|_M)
    \]
    where $\phi|_M$ is the restriction of $\phi$ to $M$.

        \item[(4)]  If $N$ is a graph manifold, then $C(N,\phi)=1.$
         
        \item[(5)] If $\phi$ is a fibered class, then $C(N,\phi)=1.$

        \item[(6)] The mapping $\phi\mapsto C(N,\phi)$ is upper semi-continuous with respect to\linebreak $\phi\in H^1(N;\R)$.

        \item[(7)] If $\phi\in H^1(N;\Z)$ is a primitive class and $\Sigma$ is a properly embedded norm-minimizing surface dual to $\phi$, then
    \[
        \prod_{\substack{M\subset N \text{ is a hyperbolic piece}\\\text{with } \phi|_M=0}
            }e^{\frac{\operatorname{Vol}(M)}{6\pi}}\leqslant
            C(N,\phi)\leqslant \tautwo(N\bb \Sigma,\Sigma_-)
    \]
    where $N\bb \Sigma:=N\setminus(\Sigma\times (-1,1))$ is the manifold cut open along $\Sigma$, and $\Sigma_\pm=\Sigma\times\{\pm1\}$.
     \end{itemize}
     
 \end{thm}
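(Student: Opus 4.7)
The plan is to derive each assertion from the corresponding statement about the full $L^2$-Alexander torsion in Theorem \ref{KnownPropertiesOfL2Alexander}, together with the definition of the leading coefficient as the limit in part (7) of that theorem. Most items require only a one-line observation; the substantive inputs come from existing literature.

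For (1), when $c\neq 0$ I would use Lemma \ref{BasicOfRelativeTorsion}(1) to write $\tautwo(N,c\phi)(t)=\tautwo(N,\phi)(t^c)$. When $c>0$ the reparametrization $t\mapsto t^c$ preserves the two ends of $\R_+$ and hence preserves the leading coefficient. When $c<0$ it interchanges the two ends, but by Theorem \ref{KnownPropertiesOfL2Alexander}(7) the asymptotic leading coefficients at $0^+$ and at $+\infty$ both equal the single real number $C(N,\phi)$. For $c=0$ the torsion reduces to the constant function $\tautwo(N)$, whose value is $e^{\operatorname{Vol}(N)/(6\pi)}$ by Theorem \ref{KnownPropertiesOfL2Alexander}(1). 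Items (2) and (3) are immediate on taking leading coefficients of the equivalences in Theorem \ref{KnownPropertiesOfL2Alexander}(2) and (3); the exponent $d$ and the restriction $\phi|_M$ play the analogous roles for the leading coefficient. For (4) and (5), the explicit formulas in Theorem \ref{KnownPropertiesOfL2Alexander}(4), (5) show that $\tautwo(N,\phi)(t)$ is equivalent to $1$ as $t\to 0^+$ and to $t^{x_N(\phi)}$ as $t\to+\infty$, so $C(N,\phi)=1$ in either case.

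For (6), the upper semi-continuity of $\phi\mapsto C(N,\phi)$ is established in Liu's paper \cite{liu2017degree}; the underlying mechanism is the multiplicative convexity of $t\mapsto\rdet{G}(\kappa(\phi,t)A)$ from Theorem \ref{LiuConvexityOfFugledeKadison}, applied to the boundary matrices of the cellular $L^2$-chain complex of $N$. This convexity forces the leading coefficient to be realized as an infimum of quantities that are continuous in $\phi$, from which the upper semi-continuity follows.

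For (7), I would obtain the lower bound by combining the JSJ gluing formula (3) with items (1), (4) and the universal inequality $C(N,\phi)\geqslant 1$ from Theorem \ref{KnownPropertiesOfL2Alexander}(7): Seifert pieces are graph manifolds and contribute $1$ by (4); each hyperbolic piece contributes at least $1$, and a hyperbolic piece $M$ on which $\phi$ restricts to zero contributes exactly $e^{\operatorname{Vol}(M)/(6\pi)}$ by (1). Multiplying these lower bounds across all pieces yields the stated inequality. The upper bound $C(N,\phi)\leqslant\tautwo(N\bb\Sigma,\Sigma_-)$ is the main theorem of Ben Aribi--Friedl--Herrmann \cite{ben2022leading}, and is precisely the inequality that Theorem \ref{LeadingCoefficientEqualsRelativeTorsion} of the present paper promotes to an equality. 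The main obstacle throughout this circle of ideas is the discontinuity of the Fuglede--Kadison determinant, which is exactly what motivates the convergence criterion developed in Section \ref{SectionLeadingCoefficientRelativeTorsion}.
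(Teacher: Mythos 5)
Your proposal is correct and takes essentially the same route as the paper, which derives (1)--(5) immediately from Lemma \ref{BasicOfRelativeTorsion} and Theorem \ref{KnownPropertiesOfL2Alexander}, and quotes \cite[Theorem 1.2]{liu2017degree} for (6) and \cite[Theorem 1.6]{ben2022leading} for (7). The only minor difference is that you re-derive the lower bound in (7) from the JSJ gluing formula together with (1), (4) and the universal bound $C(N,\phi)\geqslant 1$, whereas the paper simply cites Ben Aribi--Friedl--Herrmann for both inequalities; your derivation of that bound is valid.
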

The statements (1)--(5) are immediate consequences of Lemma \ref{BasicOfRelativeTorsion} and Theorem \ref{KnownPropertiesOfL2Alexander}. Statement (6) is proved in \cite[Theorem 1.2]{liu2017degree}, and statement (7) is proved in \cite[Theorem 1.6]{ben2022leading}.

\section{The leading coefficient and the relative $L^2$-torsion}\label{SectionLeadingCoefficientRelativeTorsion}
In this section we prove Theorem \ref{LeadingCoefficientEqualsRelativeTorsion}, relating the leading coefficient to the relative $L^2$-torsion. The main tool is a convergence theorem for Fuglede--Kadison determinants.

\begin{defn}
    Let $G$ be a group and $\phi\in H^1(G;\R)$ be a nonzero cohomology class. Suppose $A$ is a matrix over $\C G$. We say that $A$ is $\phi$-positive if every group element $g\in G$ appearing in $A$ satisfies $\phi(g)>0$.
\end{defn}
We remark that if a square matrix $A$ is $\phi$-positive then the von Neumann trace $\operatorname{tr}_G A=0$, since the identity element does not appear in $A$.

\begin{thm}\label{InequalityOfFugledeKadison}
    Let $G$ be a finitely generated, residually finite group and $\phi\in H^1(G;\R)$ be a nonzero cohomology class. Let $P,Q$ be two square matrices over $\C G$ with the same size. Suppose $P$ is over $\C[\ker \phi]$ and $Q$ is $\phi$-positive. Then \[\rdet{G}(P+Q) \geqslant \rdet{G}P.\]
\end{thm}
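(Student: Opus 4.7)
The plan is to analyze the one-parameter family
\[
    f(t) := \rdet{G}\bigl(\kappa(\phi,t)(P+Q)\bigr) = \rdet{G}\bigl(P + \kappa(\phi,t)Q\bigr),\qquad t \in \R_+,
\]
where the second equality uses that $P$ is a matrix over $\C[\ker\phi]$ and is therefore fixed by every Alexander twist. The value to bound from below is $f(1) = \rdet{G}(P+Q)$. By Liu's convexity theorem (Theorem \ref{LiuConvexityOfFugledeKadison}), $f$ is either identically zero or multiplicatively convex and everywhere strictly positive.

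The crux of the argument is the boundary limit
\[
    \lim_{t\to 0^+} f(t) = \rdet{G}(P).
\]
Because $Q$ is $\phi$-positive, every group element $g$ appearing in $Q$ satisfies $\phi(g)>0$, so the entries of $\kappa(\phi,t)Q$ are scaled by $t^{\phi(g)}\to 0$ and $\kappa(\phi,t)Q\to 0$ in norm as $t\to 0^+$; consequently $P+\kappa(\phi,t)Q\to P$ in norm. Lemma \ref{ContinuityOfFugledeKadison} yields only the upper semi-continuous estimate $\limsup_{t\to 0^+} f(t)\leq \rdet{G}(P)$. The matching lower bound is exactly what the new convergence criterion in Theorem \ref{ConvergenceOfFugledeKadison} is designed to supply, exploiting the fact that the perturbation is supported on group elements with strictly positive $\phi$-value, a structure strictly stronger than norm convergence.

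Granting the boundary limit, the proof concludes via a convex monotonicity argument. If $\rdet{G}(P)=0$ the inequality $\rdet{G}(P+Q)\geq 0$ is automatic, so assume $\rdet{G}(P)>0$; then $\lim_{t\to 0^+} f(t)>0$ forces $f\not\equiv 0$, and Liu's dichotomy gives that $f$ is multiplicatively convex and strictly positive. Setting $h(s) := \log f(e^s)$, the function $h$ is convex on $\R$ with finite limit $\lim_{s\to-\infty} h(s) = \log\rdet{G}(P)$. A convex function on $\R$ with a finite limit at $-\infty$ is necessarily non-decreasing: if $h$ had any strictly decreasing segment, the non-decreasing behavior of one-sided derivatives of a convex function would force slopes to stay negative further to the left, driving $h\to+\infty$ at $-\infty$ and contradicting the finite limit. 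Evaluating monotonicity at $s=0$ then gives
\[
    \log\rdet{G}(P+Q) = h(0) \geq \lim_{s\to-\infty} h(s) = \log\rdet{G}(P),
\]
which is the desired inequality.

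The principal obstacle is the boundary limit. Lemma \ref{ContinuityOfFugledeKadison} provides only upper semi-continuity of $\rdet{G}$ under norm convergence and no generic matching lower semi-continuity holds; promoting the semi-continuity to equality in this particular setting is precisely the role of Theorem \ref{ConvergenceOfFugledeKadison}, which will have to exploit the complementary supports of $P$ on $\ker\phi$ and of the perturbation $\kappa(\phi,t)Q$ on positive $\phi$-levels.
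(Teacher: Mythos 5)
Your overall strategy---reduce everything to the boundary limit $\lim_{t\to0^+}\rdet G(P+\kappa(\phi,t)Q)=\rdet GP$ and then use Liu's multiplicative convexity (Theorem \ref{LiuConvexityOfFugledeKadison}) to conclude that $t\mapsto\rdet G(P+\kappa(\phi,t)Q)$ is non-decreasing, hence $\rdet G(P+Q)=f(1)\geq\lim_{t\to0^+}f(t)$---has the right shape, and your observation that a convex function on $\R$ with a finite limit at $-\infty$ must be non-decreasing is correct. The fatal problem is where you obtain the boundary limit: you invoke Corollary \ref{ConvergenceOfFugledeKadison}, but in the paper that convergence criterion is itself \emph{deduced from} Theorem \ref{InequalityOfFugledeKadison}; its proof consists precisely of combining the inequality you are asked to prove (which gives $\liminf_{t\to0^+}\rdet G(P+\kappa(\phi,t)Q)\geq\rdet GP$) with the upper semicontinuity of Lemma \ref{ContinuityOfFugledeKadison}. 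So your argument is circular: the only nontrivial half of the boundary limit, the lower bound on the $\liminf$, is logically equivalent in this setting to the statement being proved, and norm convergence alone cannot supply it since the regular Fuglede--Kadison determinant is merely upper semicontinuous.

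What is missing, and what the paper actually does, is an independent argument for that lower bound. After reducing to the case where $P$ is positive (replace $(P,Q)$ by $(P^*P,P^*Q)$, which preserves the hypotheses), one regularizes to $P+\epsilon I$, whose inverse lies in $\mathcal N(\ker\phi)$; for $t$ small enough that $P+\epsilon I+\kappa(\phi,s)Q$ is invertible for all $s\leq t$, the Carey--Farber--Mathai derivative formula (Lemma \ref{CarayFarberMathai}) expresses the logarithmic derivative of $s\mapsto\rdet G\bigl(P+\epsilon I+\kappa(\phi,s)Q\bigr)$ as $\operatorname{Re}\operatorname{tr}_G\bigl((P+\epsilon I+\kappa(\phi,s)Q)^{-1}\cdot\frac{d}{ds}\kappa(\phi,s)Q\bigr)$, and expanding the inverse as a Neumann series in $\kappa(\phi,s)Q(P+\epsilon I)^{-1}$ shows each term is $\phi$-positive, hence has vanishing von Neumann trace. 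Thus the regularized determinant is constant for small $t$; Liu's convexity then upgrades constancy near $0$ to monotonicity on all of $\R_+$, and the limit $\epsilon\to0^+$ is handled by the second part of Lemma \ref{ContinuityOfFugledeKadison} together with upper semicontinuity. To make your proposal non-circular you must supply such an argument (or some other proof of lower semicontinuity for these twisted perturbations); citing Corollary \ref{ConvergenceOfFugledeKadison} cannot do the work, because that corollary sits downstream of the theorem.
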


Before we prove the theorem, we need the following lemma from \cite[Theorem 1.10 (e)]{mathai1997determinant}.

\begin{lem}\label{CarayFarberMathai}
    Let $G$ be a group, let $t\in \R^+$ and let
    \[
        A:[0,t]\longrightarrow \operatorname{GL}(n,\mathcal NG),\quad s\longmapsto A(s)
    \]
    be a continuous piecewise smooth map, then
    \[\operatorname{det}_{\mathcal{N}(G)}^{\mathrm{r}}(A(t))=\operatorname{det}_{\mathcal{N}(G)}^{\mathrm{r}}(A(0)) \cdot \exp \left(\int_{0}^{t} \operatorname{Re} \operatorname{tr}_G\left(A(s)^{-1} \cdot A'(s)\right) d s\right).\]
\end{lem}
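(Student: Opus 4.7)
The plan is to establish the infinitesimal identity
\[
    \frac{d}{ds}\log\rdet{G}(A(s))=\operatorname{Re}\operatorname{tr}_G\bigl(A(s)^{-1}A'(s)\bigr)
\]
at each smooth point of the path, and then recover the stated integral formula by applying the fundamental theorem of calculus on each smooth piece, summing across the finitely many breakpoints, and exponentiating. Continuity of $s\mapsto\rdet{G}(A(s))$ at each breakpoint, which is required for the piecing, follows from the continuity of $A$ in the operator norm together with the local continuity of $\rdet{G}$ near the identity of $\operatorname{GL}(n,\mathcal NG)$, itself a by-product of the infinitesimal computation below.

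To derive the infinitesimal formula at a smooth point $s_0$, first invoke multiplicativity of $\rdet{G}$ (Lemma \ref{BasicsOfRegularFugledeKadison}(1)) to write
\[
    \rdet{G}\bigl(A(s_0+h)\bigr)=\rdet{G}\bigl(A(s_0)\bigr)\cdot\rdet{G}\bigl(I+hB+o(h)\bigr),
\]
with $B:=A(s_0)^{-1}A'(s_0)\in M_{n,n}(\mathcal NG)$ and the $o(h)$ measured in operator norm. Taking logarithms and dividing by $h$, the claim reduces to the universal fact that
\[
    \lim_{h\to 0}\tfrac{1}{h}\log\rdet{G}(I+hX)=\operatorname{Re}\operatorname{tr}_G(X)\qquad\text{for every }X\in M_{n,n}(\mathcal NG).
\]
I would prove this by passing to the positive invertible operator $T(h):=(I+hX)^*(I+hX)$ using the identity $\rdet{G}(A)^2=\det_{\mathcal NG}(A^*A)$ valid for invertible $A$. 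Since $T(h)=I+h(X+X^*)+h^2X^*X$ is positive and invertible for small $|h|$, Borel functional calculus yields $\log\det_{\mathcal NG}T(h)=\operatorname{tr}_G\log T(h)$, and the norm-convergent expansion $\log(I+Y)=Y-\tfrac12 Y^2+\dotsb$ together with the linearity and continuity of $\operatorname{tr}_G$ gives
\[
    \log\det_{\mathcal NG}T(h)=h\operatorname{tr}_G(X+X^*)+O(h^2)=2h\operatorname{Re}\operatorname{tr}_G(X)+O(h^2).
\]
Halving and sending $h\to 0$ delivers the desired derivative.

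The main technical obstacle is the legitimacy of the Taylor expansion of the operator logarithm inside $\operatorname{tr}_G$. This is controlled by the norm convergence of the series for $\log(I+Y)$ whenever $\|Y\|<1$ and the elementary bound $|\operatorname{tr}_G(Z)|\leqslant\|Z\|$ for $Z\in\mathcal NG$ (applied entrywise on matrices), which together dominate every term beyond the linear one uniformly in small $h$. With the infinitesimal formula in hand, assembling the smooth pieces of the path, integrating on each sub-interval, and exponentiating produces exactly the closed form of the lemma; this is essentially the argument executed in \cite{mathai1997determinant}.
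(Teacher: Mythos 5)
The paper gives no internal proof of this lemma: it is imported verbatim from Carey--Farber--Mathai \cite[Theorem 1.10 (e)]{mathai1997determinant}, so your argument is by necessity a different route, namely a self-contained derivation, and it is correct. The chain of reductions works: for invertible $A(s_0)$, multiplicativity (Lemma \ref{BasicsOfRegularFugledeKadison}) gives $\rdet{G}(A(s_0+h))=\rdet{G}(A(s_0))\cdot\rdet{G}\bigl(I+hB+E(h)\bigr)$ with $\|E(h)\|=o(h)$; the identity $\rdet{G}(C)^2=\operatorname{det}_{\mathcal NG}(C^*C)=\exp\operatorname{tr}_G\log(C^*C)$ for invertible $C$ is legitimate (on positive invertible operators the Fuglede--Kadison determinant is computed exactly by $\exp\operatorname{tr}_G\log$ via the spectral density); and the norm-convergent series for $\log(I+Y)$ with $\|Y\|<1$, combined with the trace bound, yields the derivative $\operatorname{Re}\operatorname{tr}_G\bigl(A(s)^{-1}A'(s)\bigr)$, after which continuity of $\rdet{G}$ on invertibles (hence across breakpoints) and the fundamental theorem of calculus on each smooth piece give the stated formula upon exponentiating. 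Two cosmetic points you should tighten: (a) the ``universal fact'' you isolate should be stated for $I+hB+E(h)$ rather than for $I+hX$ exactly---your $T(h)$ computation already absorbs the perturbation, since $(I+hB+E(h))^*(I+hB+E(h))=I+h(B+B^*)+o(h)$, so nothing breaks, but the reduction as phrased is slightly stronger than what you prove; (b) the trace bound on $M_{n,n}(\mathcal NG)$ carries a harmless factor $n$, and for the integration step one should say explicitly that $s\mapsto\operatorname{Re}\operatorname{tr}_G\bigl(A(s)^{-1}A'(s)\bigr)$ is continuous on each smooth piece (norm continuity of inversion plus continuity of $A'$), so the derivative is integrable and the piecewise fundamental theorem of calculus applies. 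What your approach buys is independence from the external reference, making the convergence criterion of Section \ref{SectionLeadingCoefficientRelativeTorsion} self-contained; what the citation buys the paper is brevity, the lemma being a general fact about paths in $\operatorname{GL}(n,\mathcal NG)$ with no specific 3-manifold content.
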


\begin{proof}[{Proof of Theorem \ref{InequalityOfFugledeKadison}}]
    If $\rdet GP=0$, then nothing needs proving. We suppose that $\rdet GP>0$. We can further assume without loss of generality that $P$ is positive: this is because the desired inequality is equivalent to $\rdet G(P^*P+P^*Q)\geqslant \rdet G(P^*P)$ in which $P^*Q$ is $\phi$-positive and $P^*P$ is positive with entries in $\C[\ker \phi]$.
    
    For any $t>0$, define $Q(t):=\kappa(\phi,t)Q$ to be the Alexander twist of $Q$ with respect to $(\phi,t)$. Since $Q$ is $\phi$-positive, we know that the operator norm $\|Q(t)\|\ra 0$ as $t\ra 0^+$. Now for any fixed $\epsilon>0$, there exists $t_0>0$ such that for any $s< t_0$, we have $\|Q(s)\|<\|P+\epsilon I\|$, hence the matrix $P+\epsilon I+Q(s)$ is invertible and the following power series converges in norm topology:
    \[
        (P+\epsilon I+Q(s))^{-1}\cdot Q'(s)=(P+\epsilon I)^{-1}\cdot \sum_{n=0}^\infty  \big(Q(s)(P+\epsilon I)^{-1}\big)^n\cdot Q'(s)
    \]
Note that $P+\epsilon I$ is invertible over the von Neumann algebra $\mathcal N(\ker\phi)$, its inverse $(P+\epsilon I)^{-1}$ is also in $\mathcal N(\ker\phi)$.  Since $Q(s)$ and $Q'(s)$ are $\phi$-positive, any term of the power series has zero von Neumann trace. Combining the fact that the von Neumann trace is continuous with respect to weak topology (and hence to norm topology, see \cite[Theorem 2.35]{kammeyer2019introduction}), we have
    \[      \operatorname{tr}_G\Big(\big(P+\epsilon I+Q(s)\big)^{-1} \cdot Q'(s)\Big)=0
\]
for all $s<t_0$. So by Lemma \ref{CarayFarberMathai} the following identities

    \[ 
    \begin{aligned}
          &\rdet G(P+\epsilon I+Q(t))\\&\quad=
          \rdet G(P+\epsilon I)\cdot\exp\left(\int_{0}^{t} \operatorname{Re} \operatorname{tr}_G\Big(\big(P+\epsilon I+Q(s)\big)^{-1} \cdot Q'(s)\Big) d s\right)\\
          &\quad=\rdet G(P+\epsilon I)
    \end{aligned}  
    \]
    hold for any $t< t_0$.
    
    Observe that 
    \[
        P+\epsilon I+Q(t)=\kappa(\phi,t)(P+\epsilon I+Q),
    \]
    Theorem \ref{LiuConvexityOfFugledeKadison} applies that the function
    \[
        t\longmapsto \rdet G(P+\epsilon I+Q(t)),\quad t\in \R_+
    \]
    is either constantly zero or multiplicatively convex. This function is already constant for $t<t_0$, so by convexity it is non-decreasing over $\R_+$. In conclusion, for any $\epsilon>0$ and $t>0$,
    \[
        \rdet G(P+\epsilon I+Q(t))\geqslant\rdet G(P+\epsilon I).
    \]
     So we have the following:
    \[\aligned
        \rdet G(P+Q(t))&\geqslant \limsup_{\epsilon\ra 0^+} \rdet G(P+\epsilon I+Q(t))\\&\geqslant\limsup_{\epsilon\ra 0^+} \rdet G(P+\epsilon I)\\&=\rdet GP
        \endaligned
    \]
    in which the first inequality and the last equality are consequences of Lemma \ref{ContinuityOfFugledeKadison}. Set $t=1$, we obtain the desired inequality
    \[
        \rdet G(P+Q)\geqslant \rdet GP.
    \]
    \end{proof}

     The following theorem offers a criterion for the convergence of Fuglede--Kadison determinants.

     \begin{thm}
        Let $G$ be a finitely generated, residually finite group. Let $P$ be a square matrix over $\C G$, let $ Q(t)$ be a family of square matrices over $\C G$ indexed by $t>0$, each with the same size as $P$. We don't require that the family $Q(t)$ is continuous. Suppose that 
        \begin{itemize}
            \item[(1)] the operator norm $\|Q(t)\|\ra 0$ as $t\ra 0^+$, and

            \item[(2)] there exists a family of nonzero cohomology classes $\phi_t\in H^1(N;\R)$ indexed by $t>0$, such that for $t$ sufficiently small, $P$ is over $\C[\ker\phi_t]$ and $Q(t)$ is $\phi_t$-positive.
        \end{itemize}
         Then 
         \[
            \lim_{t\ra 0^+} \rdet G(P+Q(t))=\rdet GP.
         \]
    \end{thm}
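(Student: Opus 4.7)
The plan is to sandwich the quantity $\rdet G(P+Q(t))$ between two copies of $\rdet G P$, obtaining the lower bound from Theorem \ref{InequalityOfFugledeKadison} pointwise in $t$ and the upper bound from the upper semi-continuity of the regular Fuglede--Kadison determinant in norm topology (Lemma \ref{ContinuityOfFugledeKadison}).

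For the lower bound, I fix any $t>0$ small enough that hypothesis (2) holds. Then the pair $(P,Q(t))$ satisfies the assumptions of Theorem \ref{InequalityOfFugledeKadison} with respect to the nonzero cohomology class $\phi_t$: the matrix $P$ is over $\C[\ker\phi_t]$, and $Q(t)$ is $\phi_t$-positive. Applying the theorem yields
\[
    \rdet G(P+Q(t))\geqslant \rdet G P.
\]
Since this holds for all sufficiently small $t>0$, taking the limit inferior gives
\[
    \liminf_{t\to 0^+}\rdet G(P+Q(t))\geqslant \rdet G P.
\]

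For the upper bound, hypothesis (1) says $\|Q(t)\|\to 0$ as $t\to 0^+$, which means that $P+Q(t)\to P$ in the norm topology of $M_{p,p}(\mathcal NG)$. Thus for any sequence $t_n\to 0^+$, the first part of Lemma \ref{ContinuityOfFugledeKadison} applied to $\{P+Q(t_n)\}$ yields
\[
    \limsup_{n\to\infty}\rdet G(P+Q(t_n))\leqslant \rdet G P.
\]
Since this holds along every such sequence, we conclude $\limsup_{t\to 0^+}\rdet G(P+Q(t))\leqslant \rdet G P$. Combined with the previous inequality, this produces the desired limit.

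There is really no hard step in this proof: all the analytic content has been packaged into Theorem \ref{InequalityOfFugledeKadison}, whose purpose is precisely to show that adding a $\phi$-positive perturbation to a matrix supported on $\C[\ker\phi]$ cannot decrease the regular Fuglede--Kadison determinant. The only point that might look delicate is that the cohomology class $\phi_t$ is allowed to depend on $t$; but since Theorem \ref{InequalityOfFugledeKadison} is applied separately for each $t$ and its conclusion $\rdet G(P+Q(t))\geqslant \rdet GP$ involves no trace of $\phi_t$, the varying classes cause no difficulty in passing to the limit.
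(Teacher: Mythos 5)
Your proposal is correct and is essentially identical to the paper's own proof: the lower bound $\liminf_{t\to 0^+}\rdet G(P+Q(t))\geqslant \rdet G P$ comes from applying Theorem \ref{InequalityOfFugledeKadison} for each small $t$ with the class $\phi_t$, and the upper bound comes from Lemma \ref{ContinuityOfFugledeKadison} together with hypothesis (1). Your extra remark about passing through arbitrary sequences $t_n\to 0^+$ is a harmless elaboration of the same argument.
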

    \begin{proof}
        The assumption $(2)$ allows us to apply Theorem \ref{InequalityOfFugledeKadison} to conclude that\linebreak $\rdet G(P+Q(t))\geqslant \rdet GP$ for any sufficiently small $t>0$, hence \[\liminf_{t\ra 0^+} \rdet G(P+Q(t))\geqslant \rdet GP.\] On the other hand we have \[\limsup_{t\ra 0^+} \rdet G(P+Q(t))\leqslant \rdet GP\]
        by Lemma \ref{ContinuityOfFugledeKadison}. These two inequalities implies the desired equality.
    \end{proof}

    The following is an immediate corollary.
     \begin{cor}\label{ConvergenceOfFugledeKadison}
        Let $G$ be a finitely generated, residually finite group and $\phi\in H^1(G;\R)$ be a nonzero cohomology class. Let $P,\ Q$ be two square matrices over $\C G$ with the same size. Suppose $P$ is over $\C[\ker \phi]$ and $Q$ is $\phi$-positive. Then 
         \[
            \lim_{t\ra 0^+} \rdet G(P+\kappa(\phi,t)Q)=\rdet GP.
         \]
    \end{cor}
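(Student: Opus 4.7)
The plan is to apply the preceding convergence theorem directly, specializing the family $Q(t)$ to the Alexander twist $\kappa(\phi, t)Q$ and taking the constant cohomology family $\phi_t := \phi$ for all $t > 0$. Both hypotheses of that theorem will then follow from the $\phi$-positivity of $Q$, so the corollary reduces to verifying these two hypotheses.

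For hypothesis (1), I need that $\|\kappa(\phi,t)Q\| \to 0$ as $t \to 0^+$. Writing each entry of $Q$ as a finite $\C$-linear combination $\sum c_g g$ of group elements with $\phi(g) > 0$, the twist rewrites this entry as $\sum c_g t^{\phi(g)} g$. Since each group element acts as a unitary operator on $l^2(G)$, the entry-wise operator norm is bounded by a finite sum $\sum |c_g| t^{\phi(g)}$ of strictly positive powers of $t$, and hence tends to $0$; passing from entries to the full matrix (via a Frobenius-type bound) gives $\|\kappa(\phi,t)Q\|\to 0$. For hypothesis (2), with the constant family $\phi_t = \phi$, the condition that $P$ is over $\C[\ker\phi]$ is part of the standing assumption. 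The condition that $\kappa(\phi,t)Q$ be $\phi$-positive follows because, for each $t>0$, the twist merely rescales each coefficient $c_g$ by the nonzero factor $t^{\phi(g)}$, so the set of group elements appearing in $\kappa(\phi,t)Q$ coincides with that appearing in $Q$ and still consists entirely of elements with $\phi(g) > 0$.

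With both hypotheses verified, the preceding theorem yields the desired limit. There is essentially no obstacle here: the corollary is a direct specialization of the general convergence statement to a one-parameter Alexander twist with a fixed cohomology class, which is why the excerpt labels it as an immediate corollary. All of the substantive content lies upstream in Theorem \ref{InequalityOfFugledeKadison}, whose proof in turn leverages the multiplicative convexity supplied by Theorem \ref{LiuConvexityOfFugledeKadison} together with the trace-integral identity of Lemma \ref{CarayFarberMathai}.
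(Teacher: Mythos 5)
Your proposal is correct and matches the paper's intent exactly: the paper treats this as an immediate specialization of the preceding convergence theorem, obtained by taking $Q(t)=\kappa(\phi,t)Q$ and the constant family $\phi_t=\phi$, which is precisely what you do. Your verification that $\|\kappa(\phi,t)Q\|\to 0$ and that the twist preserves $\phi$-positivity is the routine check the paper leaves implicit, so there is nothing to add.
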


    We are ready to prove Theorem \ref{LeadingCoefficientEqualsRelativeTorsion}. It is implicit in the proof of \cite[Theorem 1.6]{ben2022leading} that Corollary \ref{ConvergenceOfFugledeKadison} implies Theorem \ref{LeadingCoefficientEqualsRelativeTorsion}. We include a sketched proof here for the reader's convenience. It is notable that we use Corollary \ref{ConvergenceOfFugledeKadison} to directly identify the leading coefficient without having to analyze the asymptotic degree when $t\ra 0^+$.

    \begin{thm1}
        Let $N$ be a connected, orientable, irreducible, compact 3-manifold with empty or toral boundary. Let $\phi\in H^1(N;\Z)$ and let $S$ be a Thurston norm-minimizing surface dual to $\phi$, then $C(N,\phi)=\tautwo(N\backslash\backslash  S,S_-)$.
    \end{thm1}
    
    \begin{proof}
        
        By the doubling trick in Step 3 of the proof of \cite[Theorem 1.6]{ben2022leading}, we only need to prove the theorem for closed $N$.
        
        By \cite[proposition 3.3]{ben2022leading}, there exists an oriented surface $\Sigma$ with connected components $\Sigma_1,\ldots,\Sigma_l$ and positive integers $w_1,\ldots,w_l$ such that the following hold.
        \begin{itemize}
            \item  The class $\sum_{i=1}^l w_i[\Sigma_i]\in H_2(N;\Z)$ is dual to $\phi$.

        \item  We have $\sum_{i=1}^l -w_i\chi(\Sigma_i)=x_N(\phi)$.

        \item  $N\bb\Sigma$ is connected.

        \item  $\tautwo(N\bb S,S_-)=\tautwo(N\bb \Sigma,\Sigma_-)$.
        \end{itemize}

         Define $M:=N\bb \Sigma$. It is clear that we only need to prove $C(N,\phi)=\tautwo(M,\Sigma_-)$. Construct a CW-structure for $N$ with the following properties as in \cite{ben2022leading}:

        \begin{itemize}
            \item 
                $M$ and $\Sigma\times [-1,1]$ are subcomplexes.
            \item 
                $\Sigma$ has exactly one $0$-cell $p_i$ in each component $\Sigma_i$ and the CW-structure on $\Sigma\times[-1,1]$ is a product CW-structure.
                
        \item  $M$ has precisely one $3$-cell $\beta$.

        \item  There is exactly one $0$-cell $q$ in the interior $M\setminus(\Sigma\times\{\pm 1\})$.

        \item  For each $i=1,\ldots,l$ there are $1$-cells $\nu_i^\pm$ connecting $q$ and $p_i^\pm=p_i\times\{\pm1\}$ lying entirely in $M$.
        \end{itemize}
        We remark that this CW-structure is obtained from an arbitrary smooth triangulation by doing elementary collapsing and expansions (which do not change the $L^2$-Alexander torsion). So this CW-structure can be used to calculate the $L^2$-Alexander torsion.

        We denote by $\gamma_i$ the element in $\pi_1(N,q)$ induced by the path concatenating $\nu^-_i, \nu_i^+$ and $p_i\times I$. By our construction of $\Sigma$ we know that $\phi(\gamma_i)=w_i$ and $\pi_1(M,q)$ is in the kernel of $\phi$. By lifting each cell of $N$ to the universal covering $\widehat N$ carefully, we obtain an explicit formula for the $L^2$-Alexander torsion $\tautwo(N,\phi)(t)$. In fact, as in the proof of \cite[Theorem 1.6]{ben2022leading} there exists matrices $B,C,D^+,D^-,\partial$ and $D(t)$ of appropriate size, such that:
        \begin{itemize}
            
        \item  $B,C,D^+,D^-,\partial$ are matrices over the subring $\Z[\pi_1(M,q)]\subset \Z[\ker \phi]$.

        \item  $D(t)$ is a diagonal matrix, with each entry of the form $-t^{w_i}\gamma_i$ for some $i\in\{1,\ldots,l\}$.

        \item  The equality holds:
        \[
            \tautwo(M,\Sigma_-)=\rdet{\pi_1(N,q)}
        \left(\begin{array}{ccc}
            B & C & D^+ \\
            0 & 0 &  \partial 
        \end{array}\right).
        \]

        \item  The function $f:\R^+\ra \R^+$:
        \[f(t)=\rdet{\pi_1(N,q)}
        \left(\begin{array}{cccc}
            B & C & D^+ & D^- \\
            0 & 0 &  \partial & 0\\
            0 & 0 & D(t) & Id
        \end{array}\right)\cdot \max\{1,t\}^{-2}
        \]
        is a representative for the $L^2$-Alexander torsion $\tautwo(N,\phi)(t)$.
       
        \end{itemize}
        We remark that $D(t)=\kappa(\phi,t)D(1)$ and $D(1)$ is $\phi$-positive. So by Corollary \ref{ConvergenceOfFugledeKadison}, we have
        \[
            \lim_{t\ra 0^+} f(t)=\rdet{\pi_1(N,q)}
        \left(\begin{array}{cccc}
            B & C & D^+ & D^- \\
            0 & 0 &  \partial & 0\\
            0 & 0 & 0 & Id
        \end{array}\right)=\tautwo(M,\Sigma_-).
        \]
        Since the limit is a positive real number, we conclude that this limit is exactly the leading coefficient. Hence $C(N,\phi)=\tautwo(M,\Sigma_-)$.
    \end{proof}

\section{The guts and the leading coefficient}\label{SectionGutsAndLeadingCoefficient}
   In this section, our terminologies and conventions of the sutured manifold theory follow from \cite{agol2022guts}. We will use Theorem \ref{LeadingCoefficientEqualsRelativeTorsion} and work of Agol and Zhang \cite{agol2022guts} to relate the leading coefficient to the guts of the cohomology class.

    \subsection{Preliminaries to sutured manifolds}\label{PreliminarySuturedManifold}

    \begin{defn}
        A sutured manifold $(M,\gamma)$ is a compact oriented 3-manifold $M$ with boundary together with a set $\gamma\subset \partial M$, consisting of disjoint annuli $A(\gamma)$ and tori $T(\gamma)$. A component of $A(\gamma)$ is called a sutured annulus, and a component of $T(\gamma)$ is called a sutured torus. Any sutured annulus contains a {suture}, which is an oriented simple closed curve homotopic to the core curve. The union of all the sutures is called $s(\gamma)$.

        Furthermore, every component of $R(\gamma):=\partial M\backslash \operatorname{int}(\gamma)$ is oriented. Define $R_+$ (or $R_-$) to be those components of $R(\gamma)$ whose normal vectors point out of (or into) $M$. We require that the orientation of the sutures must be coherent with the boundary orientation of $R(\gamma)$.

        A sutured manifold $(M,\gamma)$ is {taut} if $M$ is irreducible and $R_+$ and $R_-$ are taut surfaces.
    \end{defn}

    \begin{defn}
        Let $(M,\gamma)$ be a sutured manifold. Suppose $S$ is a properly embedded oriented surface such that for every component $\lambda$ of $S\cap \gamma$ one of the following holds:

        \begin{itemize}
            \item [(1)] 
                $\lambda$ is a properly embedded non-separating arc in a sutured annulus,
            \item [(2)]
                $\lambda$ is a simple closed curve in a sutured annulus $A$ representing the same homology class of the suture $s(\gamma)\cap A$,

            \item [(3)]
                $\lambda$ is a homotopically nontrivial simple closed curve in a sutured torus $T$. If $\delta$ is another such component of $S\cap\gamma$ then $\gamma$ and $\delta$ represent the same homology class in $H_1(T;\Z)$.
        \end{itemize}
    Furthermore, suppose no component of $\partial S$ bounds a disk in $R(\gamma)$ and no component of $S$ is a disk $D$ with $\partial D\subset R(\gamma)$. Then we call $S$ a decomposition surface for $(M,\gamma)$. The boundary of the tubular neighborhood $\eta(S):= S\times[-1,1]$ is denoted $S_\pm$ with the induced orientation. We then define a suture structure on $M':=\overline{M\setminus \eta(S)}$:
    \[\aligned &\gamma'=(\gamma\cap M')\cup {\eta(S_+\cap R_-)}\cup {\eta(S_-\cap R_+)},\\&
        R_+'=\big((R_+\cap M')\cup S_+\big)\setminus \operatorname{int}(\gamma'),\\&
        R_-'=\big((R_-\cap M')\cup S_-\big)\setminus \operatorname{int}(\gamma'). \endaligned
    \]
    We say that $(M,R_+,R_-,\gamma)\cut S(M',R_+',R_-',\gamma')$ is a sutured manifold decomposition, and write as $(M,\gamma)\cut S (M',\gamma')$ for short.
    \end{defn}

   \begin{defn}
       Let $(M, \gamma)$ be a sutured manifold. A product annulus is a properly embedded annulus $A$ in $M$ which does not cobound a solid cylinder in $M$ and such that one boundary component of $A$ lies on $R_-$ and the other one lies on $R_+$. A product disk is a properly embedded disk $D$ in $M$ such that $|D\cap s(\gamma)|=2$.
   \end{defn}

    \begin{defn}
        Suppose $z$ is a primitive element in $H_2(M, \partial M;\Z)$. Take a maximal collection of disjoint, non-parallel, properly norm-minimizing decomposition surfaces $\Sigma_1,\ldots,\Sigma_k$ in $M$ such that $[\Sigma_i]=z\in H_2(M, \partial M;\Z)$ for $i=1,\ldots,k$. We call the union of $\Sigma_1,\ldots,\Sigma_k$ a facet surface for $z$ and denote $\Sigma_1\cup \cdots \cup \Sigma_k$ as $F(z)$.
    \end{defn}
    
        Let $(M,\gamma)$ be a taut sutured manifold, and let $S$ be a maximal set of pairwise disjoint non-parallel product annuli and product disks in $M$. Let $(\overline M,\overline \gamma)$ be the sutured manifold obtained from $(M,\gamma)$ by decomposing along S and throwing away product sutured manifold components. Then $(\overline M,\overline \gamma)$ is called the \textit{guts} of $(M,\gamma)$. 

        \begin{rem}
            The definition of guts relies on the Kneser--Haken finiteness: any collection of disjoint, non-parallel incompressible surfaces in $M$ with first Betti number at most $b$ must have less than $c$ members, where $c=c(M,b)$ is a constant depending on the $3$-manifold $M$ and $b$. We refer to \cite[Theorem 1]{Freedman1998KneserHakenFiniteness} for a precise statement.
        \end{rem}

        Let $N$ be a connected, orientable, irreducible, compact 3-manifold with empty or toral boundary, we consider $(N,\partial N)$ as a taut sutured manifold with $\gamma(N)=\partial N$.

        \begin{defn}
 
        If $F$ is a facet surface for a primitive element $z$ in $H_2(N,\partial N;\Z)$, then the guts of the sutured manifold $N\bb F$ is called the guts of $z$, and is denoted as $\Gamma(z)$.
        \end{defn}

        The definition of $\Gamma(z)$ depends on the choice of facet surfaces. Two guts of $z$ are called \textit{equivalent} if there is an isotopy of $N$ which restricts to a homeomorphism between the two guts as sutured manifolds. When $N$ has non-degenerate Thurston norm, it is proved in \cite[Theorem 1.1]{agol2022guts} that the guts $\Gamma(z)$ do not depend on the choice of facet surface $F(z)$ up to the equivalence of guts. More surprisingly, the following Theorem \ref{AgolZhangTheorem} of Agol and Zhang \cite[Theorem 1.2]{agol2022guts} shows that, under a mild condition, the guts of $z$ is an invariant of the open Thurston cone containing $z$. Let $P$ be a boundary component of $N$ and let $\partial_P:H_2(N,\partial N;\R)\ra H_1(P;\R)$ be the following composition of mappings: $$H_2(N,\partial N;\R)\xrightarrow{\partial} H_1(\partial N;\R)\xrightarrow{\pi}H_1(P;\R)$$ where $\partial$ is from the homology long exact sequence of $(N,\partial N)$ and $\pi$ is the projection map. We say two cohomology classes $u,v\in H^1(N;\R)$ are \textit{in opposite orientations on $P$} if 
            $\partial_P u\not=0$, $\partial_P v\not=0$, and $\partial_P u=c\cdot \partial_P v$ for some $c<0$.

        \begin{thm}\label{AgolZhangTheorem}
            Let $N$ be an orientable, irreducible, compact 3-manifold with empty or toral boundary and non-degenerate Thurston norm. Let $y,z$ be two primitive elements in an open Thurston cone. If there is an open segment $(v,w)$ containing $y,z$ in the open Thurston cone such that the restrictions of $v$ and $w$ on each boundary component are not in opposite orientations, then the guts $\Gamma(y)$ is equivalent to $\Gamma(z)$.
        \end{thm}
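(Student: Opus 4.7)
The plan is to reduce the invariance statement along an open Thurston cone to an oriented double curve sum (Haken sum) argument combined with Gabai's sutured manifold theory. First I would use the density of primitive integral classes in the open segment $(v,w)$ to reduce to the case where $y$ and $z$ are \emph{adjacent} primitive classes, meaning the line segment $[y,z]$ contains no primitive class in its interior; the general statement then follows by iteration along a finite chain in $(v,w)$, and the boundary orientation hypothesis is inherited on every subsegment.

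For adjacent $y$ and $z$, I would pick facet surfaces $F(y)$ and $F(z)$ and form the oriented double curve sum $\Sigma := F(y) + F(z)$, a properly embedded surface dual to $y+z$. Thurston's linearity of $x_N$ on cones, together with the fact that oriented sums of taut surfaces representing classes in a common Thurston cone remain taut after resolving the double curves of intersection, ensures that $\Sigma$ is norm-minimizing. The hypothesis that $v$ and $w$ are not in opposite orientations on any boundary torus $P$ is precisely what is needed so that $\partial F(y)$ and $\partial F(z)$ have compatible (parallel, not antiparallel) orientations along $P$; without this, oriented summation at $\partial N$ would produce cancellations, and the resulting sum would either fail to represent $y+z$ or fail to be norm-minimizing.

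Next I would analyze the sutured manifold decomposition $N \cut{\Sigma} (M,\gamma)$ and compare it with $N \cut{F(y)} (M_y,\gamma_y)$ and $N \cut{F(z)} (M_z,\gamma_z)$. The key claim is that passing from a decomposition along $F(y)$ alone to the finer decomposition along $\Sigma$ only introduces product sutured pieces: any additional component of the decomposition surface lying inside $N \bb F(y)$ either is parallel to an existing boundary component, fails to be norm-minimizing, or cuts off a product region. This uses the maximality built into the definition of a facet surface, and yields that $\Gamma(y)$, $\Gamma(z)$, and the guts of $N \bb \Sigma$ all coincide up to removal of product pieces. By symmetry, $\Gamma(y)$ and $\Gamma(z)$ are equivalent via an isotopy of $N$ that realigns the two decompositions.

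The main obstacle will be the last step: controlling the behavior of product annuli and product disks under the double curve sum construction. One must verify that every product annulus appearing in the finer decomposition either already lies in a product region of $N \bb F(y)$ or of $N \bb F(z)$, or else becomes one after isotopy. This requires a careful cut-and-paste argument on the complexity of the facet surfaces, using the Kneser--Haken finiteness bound to terminate the induction. It is exactly here that the boundary orientation hypothesis is essential, since in its absence the sutures on a boundary torus could cancel or merge in uncontrolled ways and thereby alter the non-product core of the sutured manifold.
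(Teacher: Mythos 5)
You should first be aware that the paper does not prove this statement at all: Theorem \ref{AgolZhangTheorem} is quoted verbatim from Agol--Zhang \cite[Theorem 1.2]{agol2022guts} and is used as a black box to deduce Corollary \ref{FinitelyManyGutsInOpenThurstonCone}. So the only meaningful comparison is with Agol--Zhang's own argument, and against that standard your proposal has a genuine gap: the step you yourself flag as ``the main obstacle'' is not a technical verification left to the reader, it \emph{is} the theorem. Your plan is: form the oriented double curve sum $\Sigma=F(y)+F(z)$, and claim that decomposing along $\Sigma$ differs from decomposing along $F(y)$ alone (and, symmetrically, along $F(z)$ alone) only by product sutured pieces. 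No argument is given for this. The trichotomy you offer for the extra pieces of the decomposition surface inside $N\bb F(y)$ --- ``parallel to an existing boundary component, fails to be norm-minimizing, or cuts off a product region'' --- is not proved and, as stated, does not even close the argument, since the middle alternative is not something you may discard: if such a component could fail to be norm-minimizing you would have to rule it out or control its effect on the guts, not list it as an acceptable outcome. Kneser--Haken finiteness also does not play the role you assign it; in this circle of ideas it enters in making the definition of guts well posed (maximal collections of product annuli/disks exist and are finite), not as a termination device for a cut-and-paste induction comparing two facet decompositions. What is actually needed here is the substantial sutured-manifold input of Agol--Zhang (building on Gabai): tautness of decompositions along norm-minimizing surfaces representing classes in a common cone, and an invariance statement for the non-product part of the resulting sutured manifolds, with the ``not opposite orientations'' hypothesis used to arrange the boundary curves of the two facet surfaces coherently on each torus so that the sum and the subsequent comparison behave correctly at $\partial N$. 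Your intuition about where that hypothesis enters is right, but intuition is all that is on the page.

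A secondary, fixable issue is the opening reduction. Primitive integral classes on the straight segment $[y,z]$ are scarce: a rational point of the segment rescales to a primitive class lying off the segment, so ``adjacent primitive classes with no primitive class in the interior of $[y,z]$'' is not a well-behaved induction scheme, and you must also check that each consecutive pair is contained in an open segment of the cone whose endpoints are not in opposite orientations on any boundary component --- exactly the kind of bookkeeping carried out in the paper's proof of Corollary \ref{FinitelyManyGutsInOpenThurstonCone}. That can be repaired, but the central claim about product pieces cannot be waved through; without it the proposal is an outline of the statement rather than a proof of it.
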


        We deduce from Theorem \ref{AgolZhangTheorem} the following corollary, which is exactly what we need later for the proof of Theorem \ref{InvarianceOfLeadingCoefficientOpenCone}.
        \begin{cor}\label{FinitelyManyGutsInOpenThurstonCone}
        Let $N$ be an orientable, irreducible, compact 3-manifold with empty or toral boundary and non-degenerate Thurston norm. Then there are only finitely many equivalent classes of guts $\Gamma(z)$ for primitive $z$ in an open Thurston cone $\mathcal C$.
    \end{cor}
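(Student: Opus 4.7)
The plan is to apply Theorem \ref{AgolZhangTheorem} after partitioning $\mathcal{C}$ into finitely many relatively open convex sub-cones within which the ``opposite orientations'' hypothesis of that theorem is automatic. For each torus boundary component $P_i$ of $N$, the boundary map $\partial_{P_i}\colon H^1(N;\R)\to H_1(P_i;\R)\cong\R^2$ is linear, so I would build a suitable sub-cone decomposition of $\mathcal{C}$ by intersecting with the cells of a finite hyperplane arrangement in $H^1(N;\R)$ constructed from the maps $\partial_{P_i}$.

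Concretely, for each $P_i$ whose image under $\partial_{P_i}$ is at most one-dimensional I would use a single hyperplane (the kernel of a linear functional on $H_1(P_i;\R)$ pulled back through $\partial_{P_i}$); for each $P_i$ with two-dimensional image I would use two hyperplanes coming from two linearly independent functionals on $H_1(P_i;\R)$. On each cell of the resulting arrangement, the image under $\partial_{P_i}$ lies in an open convex cone of $\R^2$---either the origin, a single ray, an open half-plane, or an open quadrant---and such cones contain no pair of antipodal rays. Taking the common refinement of these arrangements over all boundary components and intersecting with $\mathcal{C}$ yields a partition of $\mathcal{C}$ into finitely many relatively open convex sub-cones.

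Within any such sub-cone $C'$ and for every $P_i$, no two points $v,w\in C'$ can be in opposite orientations on $P_i$, because $\partial_{P_i}(v)$ and $\partial_{P_i}(w)$ are never nonzero negative multiples of each other. Given two primitive classes $y,z\in C'$, I would use the relative openness and convexity of $C'$ to extend the segment from $y$ to $z$ slightly within $C'$ to obtain an open segment $(v,w)\subset C'$ containing both $y$ and $z$. Theorem \ref{AgolZhangTheorem} then delivers that $\Gamma(y)$ is equivalent to $\Gamma(z)$, and the finite number of sub-cones gives the corollary.

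The main obstacle I anticipate is handling the case where $\partial_{P_i}$ has two-dimensional image (which is consistent with the non-degeneracy of the Thurston norm, e.g.\ when $N$ is a once-punctured torus bundle): a single hyperplane per boundary component is insufficient, since points on that hyperplane itself could have their $\partial_{P_i}$-images along a line containing two antipodal rays; using two transverse hyperplanes per such boundary component resolves this issue.
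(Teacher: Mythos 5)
Your argument is correct, but it reaches the conclusion by a genuinely different mechanism than the paper. The paper argues by contradiction: starting from an infinite sequence of primitive classes in $\mathcal C$ with pairwise non-equivalent guts, it extracts, one boundary torus at a time, subsequences on which the images $\partial_P z_i$ are pairwise linearly independent, all zero, or all positive multiples of a fixed class, and then checks the opposite-orientation hypothesis of Theorem \ref{AgolZhangTheorem} for two members of the final subsequence. You instead give a direct finite decomposition: pulling back one or two linear functionals per boundary torus through $\partial_{P_i}$ produces a central hyperplane arrangement whose sign cells, intersected with the convex relatively open cone $\mathcal C$, are finitely many relatively open convex cones on which no two classes are in opposite orientations on any $P_i$; extending the segment joining $y$ and $z$ slightly inside such a piece then places you squarely in the hypothesis of Theorem \ref{AgolZhangTheorem}. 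Your route buys a direct proof with an explicit bound on the number of equivalence classes of guts in $\mathcal C$ (the number of sign cells, at most $3^{2b}$ for $b$ boundary tori) and needs no integrality of the classes, whereas the paper's subsequence/pigeonhole argument is quicker to state but yields no bound. Two conventions should be made explicit in your write-up: when the image of $\partial_{P_i}$ is one-dimensional, the functional on $H_1(P_i;\R)$ must be chosen not to vanish on that image line (otherwise its pullback is zero, its kernel is not a hyperplane, and it separates nothing), and when the image is zero no hyperplane is needed for that component; with these choices your cell-by-cell check (image contained in the origin, an open ray, or an open quadrant, none of which contains a pair of nonzero antipodal vectors) is exactly the condition required, and your identification of the two-dimensional case as the one needing two transverse hyperplanes is the right observation.
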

    \begin{proof}
        Suppose on the contrary that there is an infinite sequence of primitive classes 
    $\{z_i\in \mathcal C: i\geqslant 1\}$ with pairwise non-equivalent guts $\Gamma(z_i)$. For any boundary component $P$, there are two possibilities:
    \begin{itemize}
        \item[(1)] if $\{\R\cdot\partial_P(z_i) :i\geqslant 0\}$ contains infinitely many distinct real lines in the vector space $H_1(P;\R)$, then there is an infinite subsequence $z_i'$ such that $\partial_Pz_i'$ and $\partial_Pz_j'$ are linearly independent for all $i\not=j$;
        
        \item[(2)] otherwise, $\{ \R\cdot\partial_P(z_i):i\geqslant 0\}$ contains only finitely many real lines in $H_1(P;\R)$, then there is a finite set  $\Omega:=\{\phi_1,\ldots,\phi_k\}$ of nonzero primitive classes in $H_1(P;\Z)$ such that for all $i$ either $\partial_Pz_i=0$ or $\partial_Pz_i$ is a positive multiple of an element in $\Omega$.
    \end{itemize}
    In either case, we can find an infinite subsequence $\{z_i'\in \mathcal C:i\geqslant1\}$ such that one of the following holds: 
\begin{itemize}
    \item[(i)]  $\partial_Pz_i'$ is linearly independent with $\partial_Pz_j'$ for all $i\not= j$. 
    \item[(ii)]  $\partial_Pz_i'=0$ for all $i\geqslant 1$. 
    \item[(iii)] $\partial_Pz_i'$ is a positive multiple of a fixed nonzero class for all $i\geqslant1$.
\end{itemize}
    The key property we want for the sequence is: for any $i\not=j$, there is an open segment $(u,v)$ in $\mathcal C$ containing $z_i'$ and $z_j'$ with $\partial_Pu$ and $\partial_Pv$ not in the opposite orientations. In fact, for case (i) and (iii), any open segment $(u,v)$ containing $z_i'$ and $z_j'$ with $\{u,v\}$ sufficiently close to $\{z_i',z_j'\}$ suffices; for case (ii), any open segment $(u,v)$ containing $z_i',z_j'$ will satisfy $\partial_Pu=\partial_Pv=0$.
    
    The subsequence $\{z_i'\in \mathcal C:i\geqslant1\}$ enjoys the above key property for the boundary component $P$. Passing to further subsequences, we assume that this property holds for every boundary component of $N$, and the new sequence is still denoted as $\{z_i'\in \mathcal C:i\geqslant1\}$.
    Then by construction there is an open segment $(u,v)$ in $\mathcal C$ containing $z_1'$ and $z_2'$ with $\partial_Pz_1'$ and $\partial_Pz_2'$ not in the opposite orientations on any boundary component $P$ of $N$. By Theorem \ref{AgolZhangTheorem}, $\Gamma(z_1')$ is equivalent to $\Gamma(z_2')$, which is a contradiction.
    \end{proof} 
    \begin{rem}
        There is a similar result in \cite[Theorem 2.3]{Boileau2014Finitenessof3-manifolds} where the finiteness of ``patterned guts" of a closed 3-manifold is established using classical normal surface techniques. 
    \end{rem}

    \subsection{The leading coefficient is determined by the guts}

We are going to prove Theorem \ref{LeadingCoefficientEqualsGuts}. The following lemma tells us that decomposition along product disks and incompressible product annuli do not alter the relative $L^2$-torsion.
    \begin{lem}\label{CuttingAlongProductAnnuli}
        Let $(M, R_+, R_-, \gamma)$ be a taut sutured manifold. Let $(C, \partial C)\subset(M, R_+\cup R_-)$ be a disjoint collection of product disks and product annuli. Let $(M,\gamma) \stackrel{C}{\rightsquigarrow} (M',\gamma')$ be the sutured manifold decomposition. Then we have
        \[
            \tautwo(M,R_-)=\tautwo(M',R_-').
        \]
    \end{lem}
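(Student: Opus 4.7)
The plan is to reduce to one component of $C$ at a time by induction, and then to proceed in two stages: a Mayer--Vietoris transfer from $M$ to $M'$, and a simple-homotopy comparison within $M'$.

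For the first stage, I take the regular neighborhood $\eta(C) = C \times [-1,1]$ inside $M$ so that $M = M' \cup \eta(C)$ with $M' \cap \eta(C) = C_+ \cup C_-$. Setting $\alpha_- := \partial C \cap R_-$, we have $R_- \cap \eta(C) = \alpha_- \times [-1,1]$ and $R_- \cap (C_+ \cup C_-) = \alpha_- \times \{+1\} \sqcup \alpha_- \times \{-1\}$. The product structure $C = \alpha_- \times [0,1]$ identifies $\eta(C)$ with $(\alpha_- \times [-1,1]) \times [0,1]$ and the $R_-$-part with $(\alpha_- \times [-1,1]) \times \{0\}$, so Lemma \ref{BasicOfRelativeTorsion}(2) forces $\tautwo(\eta(C), R_- \cap \eta(C)) = 1$; likewise each component of $(C_\pm, \alpha_- \times \{\pm 1\}) = (\alpha_- \times [0,1], \alpha_- \times \{0\})$ has trivial $L^2$-torsion. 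Since $C$ is incompressible in the taut sutured manifold, the injectivity hypotheses for Lemma \ref{BasicOfRelativeTorsion}(3) are met on each component of $C_+ \cup C_-$, and applying the gluing formula collapses to
\[\tautwo(M, R_-) = \tautwo(M', R_- \cap M').\]

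For the second stage, I compare $\tautwo(M', R_- \cap M')$ with the target $\tautwo(M', R_-')$. The key observation is that $R_-' = (R_- \cap M') \cup C_-'$, where $C_-' := C_- \setminus \eta(\alpha_+ \times \{-1\})$ is a disk (for a product disk $C$) or an annulus (for a product annulus $C$) attached along the arc or circle $\alpha_- \times \{-1\}$. In either case $C_-'$ admits an explicit sequence of elementary collapses onto this attaching boundary, exhibiting the inclusion $R_- \cap M' \hookrightarrow R_-'$ as a simple-homotopy equivalence of CW-complexes. I would lift this collapse $\pi_1(M')$-equivariantly to $p^{-1}(R_-') \supset p^{-1}(R_- \cap M')$ in the universal cover $p : \widetilde{M'} \to M'$, making the relative $\Z\pi_1(M')$-chain complex $C_*(\widetilde{R_-'}, \widetilde{R_- \cap M'})$ collapsible to zero and hence of trivial $L^2$-torsion. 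Combined with the short exact sequence
\[0 \longrightarrow C_*(\widetilde{R_-'}, \widetilde{R_- \cap M'}) \longrightarrow C_*(\widetilde{M'}, \widetilde{R_- \cap M'}) \longrightarrow C_*(\widetilde{M'}, \widetilde{R_-'}) \longrightarrow 0\]
and the multiplicativity of $L^2$-torsion under short exact sequences, this yields $\tautwo(M', R_- \cap M') = \tautwo(M', R_-')$ and hence the lemma.

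The main obstacle is the second stage in the product annulus case. The attached annulus $C_-' \cong S^1 \times [0,1]$ has nontrivial fundamental group, so its preimage in $\widetilde{M'}$ is a disjoint union of infinite strips rather than disks, and I must use the incompressibility of the product annulus (so that $\pi_1(C_-') \hookrightarrow \pi_1(M')$ is injective and each component of $p^{-1}(C_-')$ is itself the universal cover of $C_-'$). With this in hand, each elementary pair (a 2-cell and a free 1-face) in $C_-'$ lifts to pairs where the lifted 1-face remains free, so the collapses apply equivariantly and give the desired triviality.
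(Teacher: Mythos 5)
Your core argument is the same as the paper's: decompose one component at a time, write $(M,R_-)$ as the union of $(M',\,\cdot\,)$ and the product piece $(C\times[-1,1],\alpha_-\times[-1,1])$, observe that the product piece and the two copies of $(C,\alpha_-)$ have trivial relative $L^2$-torsion by Lemma \ref{BasicOfRelativeTorsion}\,(2), and apply the gluing formula Lemma \ref{BasicOfRelativeTorsion}\,(3). Where you diverge is only in the bookkeeping between $R_-\cap M'$ and the official $R_-'$: the paper disposes of this by isotoping the sutures of $M'$ (absorbing the copies of $C$ into $\gamma'$, so that $R_\pm'=R_\pm\setminus(\partial C\times(-1,1))$ on the nose), whereas you run a second stage showing $\tautwo(M',R_-\cap M')=\tautwo(M',R_-')$ via an equivariant collapse of the attached collar $C_-'$ and multiplicativity of torsion in a short exact sequence. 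Your stage two is correct but heavier than necessary: since $R_-'$ is just $R_-\cap M'$ with a collar annulus (or half-disk) attached along a boundary circle (or arc) inside $\partial M'$, the two pairs $(M',R_-\cap M')$ and $(M',R_-')$ are related by an ambient isotopy of $\partial M'$, which is exactly the paper's shortcut; in particular the $\pi_1$-injectivity worry you raise for the annular $C_-'$ never has to enter.

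Two points in your write-up do need justification rather than assertion. First, incompressibility of a product annulus is not part of the definition used here (the definition only forbids cobounding a solid cylinder), yet your application of Lemma \ref{BasicOfRelativeTorsion}\,(3) needs $\pi_1$-injectivity of $C_\pm$ in $M'$; the paper derives this from tautness: if $C$ were compressible, both boundary circles would bound disks in the incompressible surfaces $R_\pm$, and irreducibility would force $C$ to cobound a solid cylinder, a contradiction. Second, your induction over the components of $C$ tacitly reuses tautness of the intermediate sutured manifolds (that is what makes the incompressibility argument, and hence the gluing hypotheses, available at each step); this requires Gabai's result \cite{gabai1987foliations} that decomposition along product disks and product annuli preserves tautness, which the paper cites for exactly this purpose. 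With these two additions your proof is complete.
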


     \begin{proof}
        It is a result of Gabai \cite[Lemma 0.4]{gabai1987foliations} that sutured decomposition along product annuli and product disks preserves tautness. So it suffices to prove the equality for $C$ a single product annulus or product disk.
     
        Suppose that $C$ is a product annulus. The embedding of $C$ in $M$ must be $\pi_1$-injective, for otherwise each component of $\partial C$ is contractible in $M$, then one of the component of $\partial C$ must bound a disk in $R_+$ and the other component must bound a disk in $R_-$ because $R_\pm$ are incompressible surfaces by tautness of $M$, it follows that $C$ along with these two disks cobound a 3-ball by irreducibility of $M$, which would violate the definition of a product annulus. We can isotope the suture of $M'$ as in Figure 1 so that 
        \[\aligned
            &M'=M\setminus (C\times (-1,1)),
        \\&
            R_\pm'=R_\pm\setminus (\partial C\times (-1,1)),\quad \gamma'=\gamma\cup (C\times\{\pm1\}).\endaligned
        \] Let $\alpha$ be the boundary circle of $C$ on $R_-$, we can view the pair $(M,R_-)$ as
        \[(M,R_-)=(M',R_-')\cup (C\times [-1,1], \alpha\times [-1,1]).\]
        The intersection of the two pairs are two copies of $(C,\alpha)$, which have relative $L^2$-torsion equal to 1. So we have
        \[
            \tautwo(M,R_-)=\tautwo(M',R_-')\cdot\tautwo(C\times [-1,1], \alpha\times [-1,1])=\tautwo(M',R_-')
        \]
        by Lemma \ref{BasicOfRelativeTorsion}.

        When $C$ is a product disk, we isotope the suture of $M'$ as in Figure 1 so that
        \[\aligned
            &M'=M\setminus (C\times(-1,1)),\\&
            R_\pm'=R_\pm\setminus (\partial C\times (-1,1)),\quad
            \gamma'=(\gamma\cap M')\cup (C\times\{\pm1\}).\endaligned
        \]
        The intersection $R_-\cap \partial C$ is an arc which we denote by $\beta$, we can view the pair $(M,R_-)$ as 
        \[(M,R_-)=(M',R_-')\cup (C\times [-1,1],\beta\times[-1,1]).\] The intersection of the two pairs are two copies of $(C,\beta)$, which have relative $L^2$-torsion equal to 1. So we have
        \[
            \tautwo(M,R_-)=\tautwo(M',R_-')\cdot\tautwo(C\times [-1,1], \beta\times [-1,1])=\tautwo(M',R_-')
        \]
        again by Lemma \ref{BasicOfRelativeTorsion}. This finishes the proof.
    \end{proof}
\begin{figure}        
\label{FigureOfProductAnnuliAndDisks}

        \centering
        \begin{subfigure}
        
        \def\svgwidth{1\columnwidth}
        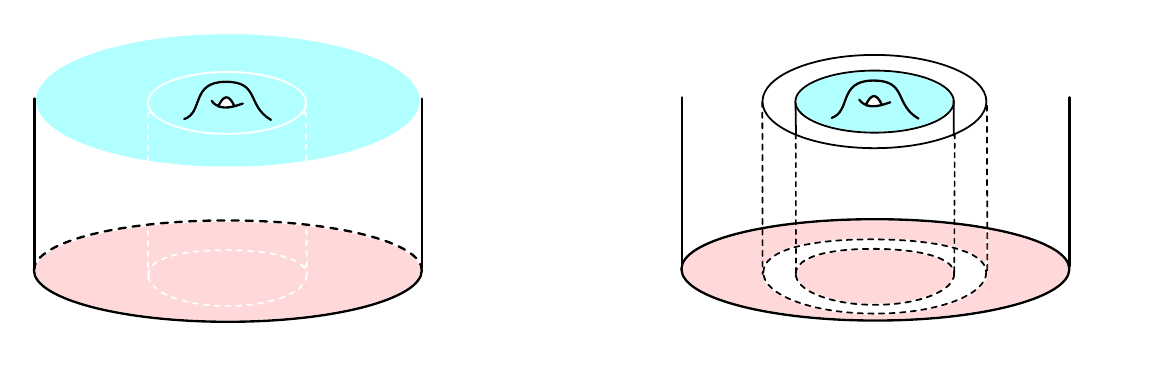
        \end{subfigure}
        \begin{subfigure}
        
            \def\svgwidth{1\columnwidth}
             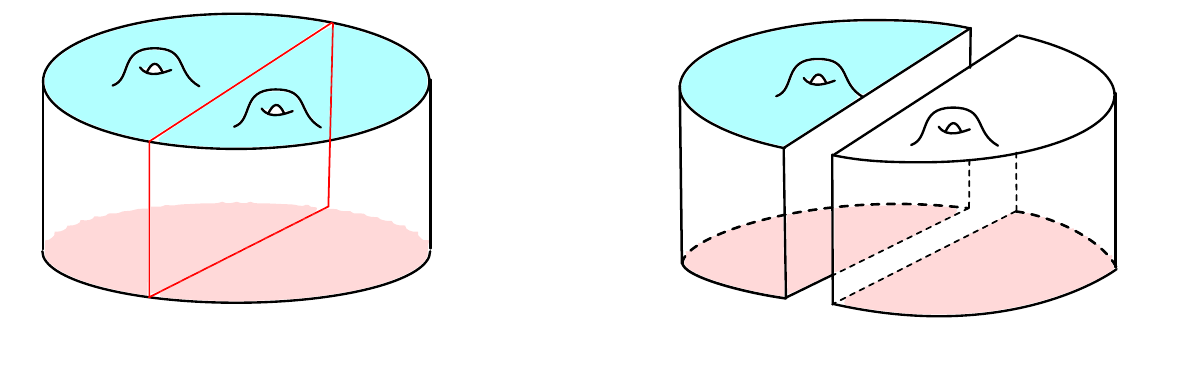
        \end{subfigure}
        \caption{Decomposition along a product annulus (the above part) and a product disk (the lower part). By definition $R_+'$ and $R_-'$ each contains a copy of $C$ respectively, but one can isotope (by absorbing the copies of $C$ into the suture) so that $R_\pm'=R_\pm\setminus(\partial C\times (-1,1))$ as in the figure on the right. The two copies of $C$ in $\partial M'$ then become subsets of $\gamma'$.}
     \end{figure}

    \begin{thm2}
 Let $N$ be a connected, orientable, irreducible, compact 3-manifold with empty or toral boundary. Then for any primitive $\phi\in H^1(N;\Z)$, we have
        \[
            C(N,\phi)=\tautwo(\Gamma(\phi), R_-\Gamma(\phi)).
        \]
    \end{thm2}
    \begin{proof}
    Let $F=F(\phi)$ be any facet surface for $\phi$, then $F$ is dual to $n\phi$ for some $n\not=0$. We have 
        \[
            C(M,\phi)=C(M,n\phi)=\tautwo(M\backslash\backslash F,F_-)
        \]
    by Theorem \ref{LeadingCoefficientEqualsRelativeTorsion} and dilatation invariance. The guts $\Gamma(\phi)$ (which may depend on the facet surface when the Thurston norm degenerates) is obtained from $M\backslash\backslash F$ by decomposing along a disjoint union of product annuli and product disks, and deleting product sutured manifold components. By the previous lemma and Lemma \ref{BasicOfRelativeTorsion} that product sutured manifold has trivial relative $L^2$-torsion, we have 
        \[
            \tautwo(M\backslash\backslash F,F_-)=\tautwo(\Gamma(\phi), R_-\Gamma(\phi)).
        \]
        This finishes the proof.
    \end{proof}

    \subsection{The leading coefficient of an open Thurston cone}
     
    The remaining part of the paper is devoted to the proof of Theorem \ref{InvarianceOfLeadingCoefficientOpenCone}. The following lemma is from \cite[Corollary 5.8]{duan2022positivity} generalizing Theorem \ref{LiuConvexityOfFugledeKadison}. In fact, Theorem \ref{LiuConvexityOfFugledeKadison} is a special case of this lemma when $n=1$.

    \begin{lem}\label{MultiAlexanderConvexity}
        Let $G$ be a finitely generated, residually finite group. Suppose that $A$ is a square matrix over $\C G$ with $\rdet G(A)>0$. Then for any positive real number $t_0$, any positive integer $n$ and any cohomology classes $\phi_1,\ldots,\phi_n\in H^1(G;\R)$, the following function
        \[
            W:\R^n\ra \R,\quad W(s_1,\ldots,s_n)=\log\rdet G\Big(\kappa(s_1\phi_1+\cdots+s_n\phi_n,t_0)A\Big)
        \]
        is convex.
    \end{lem}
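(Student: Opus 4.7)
The natural approach is to reduce to the one-variable statement, Theorem \ref{LiuConvexityOfFugledeKadison}, by exploiting the fact that a function on $\R^n$ is convex if and only if its restriction to every affine line is convex. I would fix a basepoint $s^{(0)}\in\R^n$ and a direction $v\in\R^n$ and aim to show $\lambda\mapsto W(s^{(0)}+\lambda v)$ is convex in $\lambda\in\R$.

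The key observation is that the Alexander twist is multiplicative in the cohomology class: setting $\phi^{(0)}:=\sum_i s_i^{(0)}\phi_i$ and $\phi^{(v)}:=\sum_i v_i\phi_i$, a direct computation from $\kappa(\psi,t)(g)=t^{\psi(g)}g$ shows that the twists $\kappa(\phi^{(0)},t_0)$ and $\kappa(\phi^{(v)},t_0^\lambda)$ commute and satisfy
\[
    \kappa\bigl(\phi^{(0)}+\lambda\phi^{(v)},\,t_0\bigr)A \;=\; \kappa\bigl(\phi^{(v)},\,t_0^\lambda\bigr)A',\qquad A':=\kappa\bigl(\phi^{(0)},t_0\bigr)A.
\]
Applying Theorem \ref{LiuConvexityOfFugledeKadison} first to the single class $\phi^{(0)}$, the function $t\mapsto \rdet G(\kappa(\phi^{(0)},t)A)$ is either constantly zero or multiplicatively convex; since its value at $t=1$ is $\rdet G(A)>0$, the latter alternative must hold, and in particular $\rdet G(A')>0$. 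Applying Theorem \ref{LiuConvexityOfFugledeKadison} again, now to the class $\phi^{(v)}$ and the matrix $A'$, the function $t\mapsto \rdet G(\kappa(\phi^{(v)},t)A')$ is likewise multiplicatively convex, so the function $\mu\mapsto \log\rdet G(\kappa(\phi^{(v)},e^\mu)A')$ is convex on $\R$. Substituting $t=t_0^\lambda$, i.e.\ $\mu=\lambda\log t_0$, the restriction $\lambda\mapsto W(s^{(0)}+\lambda v)$ is the composition of this convex function with an affine reparametrization of $\lambda$, hence convex.

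Since convexity along every affine line implies convexity on $\R^n$, this would complete the proof. The only mild obstacle is ensuring that the regular Fuglede--Kadison determinants stay strictly positive at each intermediate stage so that the logarithms are finite and the multiplicatively-convex branch of Theorem \ref{LiuConvexityOfFugledeKadison} applies rather than the trivial zero branch; this is handled uniformly by the dichotomy in that theorem together with the hypothesis $\rdet G(A)>0$. No input beyond the $n=1$ case is required.
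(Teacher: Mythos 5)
Your argument is correct. The paper does not actually prove this lemma in the text: it is imported wholesale from \cite[Corollary 5.8]{duan2022positivity}, with the remark that Theorem \ref{LiuConvexityOfFugledeKadison} is the $n=1$ case. What you do differently is show that the general case follows \emph{formally} from the $n=1$ case: the identity $\kappa(\phi^{(0)}+\lambda\phi^{(v)},t_0)A=\kappa\bigl(\phi^{(v)},t_0^{\lambda}\bigr)\bigl(\kappa(\phi^{(0)},t_0)A\bigr)$ is a correct consequence of $\kappa(\psi,t)(g)=t^{\psi(g)}g$, the matrix $A'=\kappa(\phi^{(0)},t_0)A$ is again over $\C G$ with $\rdet G(A')>0$ (the zero branch of Theorem \ref{LiuConvexityOfFugledeKadison} is excluded because the value at $t=1$ is $\rdet G(A)>0$), multiplicative convexity of $t\mapsto\rdet G(\kappa(\phi^{(v)},t)A')$ is exactly convexity of $\mu\mapsto\log\rdet G(\kappa(\phi^{(v)},e^{\mu})A')$, and precomposing with the affine map $\lambda\mapsto\lambda\log t_0$ gives convexity of $W$ along the line $s^{(0)}+\lambda v$; the same dichotomy also shows $W$ is finite everywhere, so the line-restriction criterion for convexity applies (the degenerate case $t_0=1$ just gives a constant restriction). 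Your route buys a short, self-contained proof using only results already quoted in this paper, whereas the paper's route defers to the external corollary in \cite{duan2022positivity}, whose proof redoes the convexity analysis for multi-parameter twists; the content is the same, but your observation that no input beyond the $n=1$ theorem is needed is a genuine simplification.
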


    The following lemma allows us to  write down a simple formula for the $L^2$-Alexander torsion of hyperbolic 3-manifolds. 
    
     \begin{lem}\label{L2AlexanderTorsionFormulaForHyperbolic3Manifold}
        Let $N$ be a connected, orientable, compact 3-manifold whose interior admits a hyperbolic structure and let $\mathcal C$ be an open Thurston cone of $H^1(N;\R)$. Denote by $G$ the fundamental group of $N$, then there exists a positive integer $d$, a square matrix $A$ over $\Z G$ and an element $h\in G$ such that 
        \[
            (\tautwo(N,\phi)(t))^d \doteq \rdet{G} (\kappa(\phi,t)A)/\max\{1,t^{\phi(h)}\}^2
        \]
        for all $\phi\in H^1(N;\R)$ and $t>0$. Moreover, we can require that $\phi(h)\geqslant 0$ for all $\phi\in\mathcal C$.
    \end{lem}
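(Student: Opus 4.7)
The plan is to construct an explicit finite CW-structure on $N$ from which the $L^2$-Alexander torsion admits a tractable Fox-calculus computation, and then to identify the resulting expression with the claimed form.

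First, I would select $h\in G$ such that $\phi(h)\geqslant 0$ for all $\phi\in\mathcal C$: since $\mathcal C$ is a rational polyhedral cone in $H^1(N;\R)$ and $N$ is hyperbolic (so that $\mathcal C$ is a proper cone), its dual cone in $H_1(N;\R)$ contains a nonzero integral point, which I lift through the Hurewicz map to some $h\in G$ satisfying the required positivity condition. Next, I would construct a finite CW-complex $X$ homotopy equivalent to $N$ with a single $0$-cell and with $h$ represented by one of its $1$-cells. Since $\chi(N)=0$ (as $\partial N$ is empty or a union of tori), the cell counts are constrained: for $N$ with non-empty toral boundary, $X$ may be taken to be a $2$-dimensional spine with $n+1$ $1$-cells $g_0=h, g_1,\ldots,g_n$ and $n$ $2$-cells $r_1,\ldots,r_n$; the closed case is analogous but uses a $3$-dimensional CW-structure (possibly after passing to a finite cover, whose degree is absorbed into $d$), with Poincar\'e duality matching the contributions of $\partial_1$ and $\partial_3$.

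The cellular chain complex of the universal cover of $X$ then yields $\tautwo(N,\phi)(t)$ as a ratio of regular Fuglede--Kadison determinants formed from the Fox-derivative matrix and the boundary maps. The key computational step is to isolate the factor $\rdet G(\kappa(\phi,t)(h-1))$ from the denominator. Since $G$ is torsion-free (as a hyperbolic $3$-manifold group), $h$ has infinite order; by Lemma \ref{BasicsOfRegularFugledeKadison}(4) combined with the Mahler measure formula for $\C[\Z]$, one has
\[
    \rdet G(\kappa(\phi,t)(h-1))=\max\{1,t^{\phi(h)}\}.
\]
The remaining factors assemble into $\rdet G(\kappa(\phi,t) B)$ for a square matrix $B$ over $\Z G$ built from the Fox derivatives (with the column corresponding to $h$ deleted). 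Using the contracting-chain-homotopy reduction employed in the proof of Theorem \ref{LeadingCoefficientEqualsRelativeTorsion}, together with the block-diagonal doubling $A:=B\oplus B$ and a suitable positive integer $d$, one obtains the stated identity.

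The main obstacle is the rigorous justification that the Fuglede--Kadison determinants of the non-square boundary matrices ($\partial_1$ in the bounded case, and both $\partial_1$ and $\partial_3$ in the closed case) reduce to exactly the single factor $\max\{1,t^{\phi(h)}\}^{2}$. The essential tool is that the operator $h-1$ acts injectively on $l^2(G)$, permitting one to ``invert'' the column of $\partial_1$ containing $h-1$ at the cost of the single Mahler-measure factor; combined with the squaring produced either by the block-diagonal doubling (in the bounded case) or by Poincar\'e duality between $\partial_1$ and $\partial_3$ (in the closed case), this delivers the exponent $2$ in the denominator. A secondary difficulty is the compatibility between the distinguished $h$ and the CW-structure on $N$, which can be arranged by Tietze transformations since these preserve $\tautwo$ up to $\doteq$.
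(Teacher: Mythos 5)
Your route differs from the paper's, and the bounded case of it is broadly workable: taking a spine of $N$, using Tietze moves (elementary expansions, hence simple homotopy equivalences --- you should invoke simple homotopy invariance of $\tautwo$, not mere homotopy equivalence, though Whitehead-group vanishing makes this harmless here) to make your chosen $h$ a generator, and applying Turaev's method gives $\tautwo(N,\phi)(t)\doteq \rdet{G}(\kappa(\phi,t)B)/\max\{1,t^{\phi(h)}\}$, and then $d=2$, $A=B\oplus B$ yields the stated form. Your selection of $h$ from the dual cone of $\mathcal C$ is also fine, and in some ways simpler than the paper's, which obtains positivity from Fried's theorem on fibered cones.

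The genuine gap is the closed case, which you dispose of with ``Poincar\'e duality matching the contributions of $\partial_1$ and $\partial_3$'' and an unmotivated finite cover. For a closed $N$ with one $0$-cell and one $3$-cell, the entries of $\partial_3$ are dictated by how the single $3$-cell runs over the $2$-cells; unlike $\partial_1$, they cannot be adjusted by Tietze transformations, and Poincar\'e duality relates $\partial_1$ to the top boundary map of the \emph{dual} cell structure, not to $\partial_3$ of the same one. In general the Turaev-type reduction therefore produces a denominator $\max\{1,t^{\phi(g_1)}\}\cdot\max\{1,t^{\phi(g_2)}\}$ with two \emph{different} group elements, which is not of the required form $\max\{1,t^{\phi(h)}\}^{2}$ for a single $h$ with $\phi(h)\geqslant 0$ on $\mathcal C$, and no squaring trick repairs this. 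This is precisely the point where the paper invokes heavy machinery: by Agol's virtual fibering and specialness, a finite cover $N''\to N$ fibers over $S^1$ with the lift of $\mathcal C$ in the closure of a fibered cone; taking the monodromy pseudo-Anosov and a power fixing a singular point $P$, the mapping torus CW structure has $\partial_3=(1-h,\ast)$ and $\partial_1=(\ast,1-h)^{T}$ with the \emph{same} $h$, namely the suspension-flow orbit through $P$, and Fried's theorem gives $\phi(h)>0$ on the fibered cone, hence $\phi(h)\geqslant 0$ on the closure containing the lifted $\mathcal C$; the covering degree is what produces $d$, via Theorem \ref{KnownPropertiesOfL2Alexander}\,(2) and Lemma \ref{BasicsOfRegularFugledeKadison}\,(4). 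Without an ingredient of comparable strength, your closed case does not go through, so the proposal as written proves the lemma only for $N$ with nonempty boundary.
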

    \begin{proof}
        By Agol's RFRS criterion for virtual fibering \cite{agol2008criteria} and the virtual specialness of hyperbolic 3-manifolds \cite{agol2013virtual}, there is a finite covering $p_1: N'\ra N$ such that $ N'$ fibers over the circle and that $p_1^*\mathcal C$ lies in the closure of a fibered cone $\mathcal C'$ of $H^1( N';\R)$. Pick a primitive class $\psi\in \mathcal C'$ and consider the fibration determined by $\psi$ with fiber surface $\Sigma$, then $\Sigma$ is connected and $N'$ is homeomorphic to a mapping torus of a surface automorphism $f:\Sigma\ra \Sigma$, namely
        \[
            N'=(\Sigma\times [-1,1])/(x,-1)\sim(f(x),1).
        \]
        Moreover, since $N'$ is hyperbolic, then $\chi(\Sigma)<0$ and $f$ is isotopic to a pseudo-Anosov surface automorphism, which preserves two mutually transverse invariant singular foliations on $\Sigma$ (see \cite{FLP2012ThurstonWorkOnSurface}). So up to a homeomorphism of $N'$ we can assume that $f$ is pseudo-Anosov. Since $f$ preserves the non-empty finite set of singular points of the foliations, there is a positive integer $n$ such that $f^n:\Sigma\ra \Sigma$ fixes a singular point $P\in \Sigma$ of the foliations. Let $p_2:N''\ra N'$ be the $n$-fold covering of $N'$ where
        \[
            N''=(\Sigma\times [-1,1])/(x,-1)\sim(f^n(x),1).
        \] The fibration of $N''$ coming from this mapping torus structure is exactly the fibration of the fibered class $p_2^*\psi$. Denote by $\mathcal C''\subset H^1(N'';\R)$ the fibered cone containing $p_2^*\psi$, then  $C''$ contains $p_2^*C'$. We denote by $G'':= \pi_1(N'',P\times\{-1\})$ the fundamental group of $N''$ with basepoint $P\times\{-1\}$,
        and let $h$ be the element represented by the oriented loop $\{P\}\times [-1,1]$. This loop is a closed trajectory of the suspension flow of $N''$, then $\phi(h)>0$ for all classes $\phi\in \mathcal C''$ by work of Fried (see \cite[Theorems 14.10--14.11]{FLP2012ThurstonWorkOnSurface}). By continuity, it follows that $\phi(h)\geqslant 0$ for all $\phi$ in the closure of $\mathcal C''$.
        
        Consider the composite covering
        \[
            p:=p_1\circ p_2:N''\ra N.
        \]
        Since $\mathcal C$ lifts to a subset of the closure of $\mathcal C'$ via $p_1$, and $\mathcal C'$ lifts to a subset of the fibered cone $\mathcal C''$ via $p_2$, it follows that $\mathcal C$ lifts to a subset of the closure of $\mathcal C''$ via $p:N''\ra N$. In particular we have $p^*\phi(h)\geqslant0$ for all $\phi\in\mathcal C$, or equivalently,
        \begin{equation}\notag
            \phi(p_*h)\geqslant 0 \text{\quad for all\quad} \phi\in \mathcal C
        \end{equation}
        where $p_*:\pi_1(N'')\ra \pi_1(N)$ is the induced injection on fundamental groups.

        In order to compute the $L^2$-Alexander torsion of $N''$, we construct an explicit CW-complex homotopy equivalent to $N''$. The following procedure is essentially the same as \cite[Section 8.3]{dubois2015l2} and \cite[proof of Theorem 4.5]{duan2022positivity}. Construct a CW complex $X$ modeled on $\Sigma$ with a single 0-cell $P$, $k$ 1-cells, and a 2-cell. By CW approximation, there is a cellular map $g:X\ra X$ homotopic to $f^n$ with $g(P)=P$. Then the mapping torus $X_g$ is homotopy equivalent to $N''$, which is a simple homotopy equivalence since the Whitehead group of a fibered 3-manifold is trivial, see \cite[Theorem 19.4, Theorem 19.5]{waldhausen1978algebraicPart2}. Hence by simple homotopy invariance of $L^2$-Alexander torsions \cite[Theorem 3.93]{luck2002l2} we have
\[\tautwo(N,\phi)(t) \doteq  \tautwo(X_g,\phi)(t).\] After choosing a suitable basis, the CW-chain complex of the universal covering $\widehat {X_g}$ as a free $\Z G''$-module has the form
    \[C_*(\widehat{X_g})=(0\longrightarrow \Z G''\stackrel{\partial_3}\longrightarrow \Z G''^{k+1} \stackrel{\partial_2}\longrightarrow \Z G''^{k+1} \stackrel{\partial_1} \longrightarrow \Z G''\longrightarrow 0 )\]
where
\[\partial_3=(1-h,*),\quad \partial_2=\begin{pmatrix}
* & *\\
A & *
\end{pmatrix},\quad\partial_1=\begin{pmatrix}
*\\
1-h
\end{pmatrix}
\]
in which $``*"$ stands for matrices of appropriate size and $A$ is a matrix over $\Z G''$ of size $k\times k$ with $\rdet {G''}A>0$. Hence for all $\phi\in H^1(N'';\R)$,
\[\aligned
            \tautwo(N'',\phi)(t)&\doteq\tautwo(X_g,\phi)(t)\\&\doteq\rdet{G''}(\kappa(\phi,t)A)\cdot \rdet {G''}(\kappa(\phi,t)(1-h))^{-2}\\
            &=\rdet {G''}(\kappa(\phi,t)A)/\max\{1,t^{\phi(h)}\}^2 \endaligned
        \]
        Here we used the Turaev's method of calculating torsions (see \cite[Lemma 3.2]{dubois2015l2}) and the Fuglede--Kadison determinant for free abelian groups (see \cite[Lemma 2.8]{dubois2015l2}).
  Let $d$ be the covering degree of $p:N''\ra N$, then
\[\aligned
    (\tautwo(N,\phi)(t))^d&\doteq\tautwo(N'',p^*\phi)(t)\\
    &\doteq\rdet {G''}(\kappa(p^*\phi,t)A)/\max\{1,t^{p^*\phi(h)}\}^2\\&=\rdet {G}(\kappa(\phi,t)p_*A)/\max\{1,t^{\phi(p_*h)}\}^2  \endaligned
\]
where $p_*A$ is a square matrix over $\Z G$ and $p_*h\in G$; the first equality follows from Theorem \ref{KnownPropertiesOfL2Alexander}\,(2) and the third equality follows from Lemma \ref{BasicsOfRegularFugledeKadison}\,(4). This finishes the proof of the lemma.
    \end{proof}
    
The following lemma about convex functions will be used in the proof of Theorem \ref{InvarianceOfLeadingCoefficientOpenCone}.

\begin{lem}\label{LemmaOfConvexFunctions}
            Let $f:\R^2\ra \R$ be a real function, such that the following conditions hold:
            \begin{enumerate}[{\rm(i)}]
                \item The restrictions  $f|_{\Omega_1}$ and $f|_{\Omega_2}$ are convex, where $$\Omega_1=\{(x,y)\in \R^2\mid x,y\geqslant0\},\quad\Omega_2=\{(x,y)\in \R^2\mid x,y\leqslant0\}.$$
                \item For any $\lambda\in [0,1]$, there exists $b_1(\lambda),b_2(\lambda)\in \R$ and $C(\lambda)>0$ such that
                \[
                    \lim_{s\ra-\infty}( f(\lambda s,(1-\lambda)s)-b_1(\lambda)s)=\lim_{s\ra+\infty}( f(\lambda s,(1-\lambda)s)-b_2(\lambda)s)=C(\lambda)
                \]
                and $b_2(\lambda)-b_1(\lambda)=1$.
            \end{enumerate}
            Then $C(\lambda)\leqslant \lambda C(1)+(1-\lambda)C(0)$ for any $\lambda\in[0,1]$.
        \end{lem}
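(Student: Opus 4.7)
The plan is to combine the quadrant-wise convexity of $f$ with the two-sided asymptotic data in hypothesis (ii). Set $g_\mu(s) := f(\mu s,(1-\mu)s)$ for $\mu\in[0,1]$ and $s\in\R$. The elementary identity
\[
(\lambda s,(1-\lambda)s) \;=\; \lambda\cdot(s,0)\,+\,(1-\lambda)\cdot(0,s),
\]
together with the fact that both $(s,0)$ and $(0,s)$ lie in $\Omega_1$ when $s\geqslant 0$ and in $\Omega_2$ when $s\leqslant 0$, lets me apply hypothesis (i) on each quadrant separately to obtain the single pointwise comparison
\[
g_\lambda(s) \;\leqslant\; \lambda\, g_1(s) + (1-\lambda)\, g_0(s) \qquad \text{for every } s\in\R.
\]

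The next step is to align this with the asymptotic expansions. I would subtract $b_2(\lambda)s$ from both sides and rearrange to
\[
g_\lambda(s)-b_2(\lambda)s \;\leqslant\; \lambda\bigl(g_1(s)-b_2(1)s\bigr)+(1-\lambda)\bigl(g_0(s)-b_2(0)s\bigr)+K_+\, s,
\]
where $K_+ := \lambda b_2(1)+(1-\lambda)b_2(0)-b_2(\lambda)$. Letting $s\to+\infty$ and invoking hypothesis (ii), the left-hand side tends to $C(\lambda)>0$ while the first two terms on the right tend to $\lambda C(1)+(1-\lambda)C(0)>0$. If $K_+<0$ the right-hand side would diverge to $-\infty$, which is absurd; therefore $K_+\geqslant 0$. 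Repeating the manipulation with $b_1(\lambda)s$ in place of $b_2(\lambda)s$ and letting $s\to-\infty$ gives, by the same positivity argument, the symmetric conclusion
\[
K_- \;:=\; \lambda b_1(1)+(1-\lambda)b_1(0)-b_1(\lambda) \;\leqslant\; 0.
\]

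The normalization $b_2(\mu)-b_1(\mu)=1$ that is part of hypothesis (ii) yields $K_+=K_-$, so both must vanish; in particular $b_1$ and $b_2$ are affine in $\lambda$. Substituting $K_+=0$ back into the asymptotic inequality as $s\to+\infty$ immediately yields the desired estimate $C(\lambda)\leqslant\lambda C(1)+(1-\lambda)C(0)$.

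The main obstacle I anticipate is precisely the intermediate slope-matching step: the convexity inequality alone controls $g_\lambda$ only \emph{up to an unknown linear drift} $K_\pm\cdot s$, and a one-sided asymptotic analysis gives no information whenever that drift has the unfavorable sign. The essential point is that the hypotheses at $+\infty$ and at $-\infty$ are coupled by the normalization $b_2-b_1\equiv 1$, and it is only by combining the two directions that the drift is pinned down to zero; once this is done, the convex combination of leading coefficients drops out mechanically.
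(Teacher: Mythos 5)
Your argument is correct and is essentially the paper's own proof: the same quadrant-wise convexity comparison $f(\lambda s,(1-\lambda)s)\leqslant\lambda f(s,0)+(1-\lambda)f(0,s)$, followed by the same two-sided asymptotic analysis in which the normalization $b_2(\cdot)-b_1(\cdot)\equiv 1$ forces the linear drift to vanish. The paper merely simplifies bookkeeping by first subtracting the linear function $(x,y)\mapsto b_1(1)x+b_1(0)y$ so that $b_1(0)=b_1(1)=0$ and $b_2(0)=b_2(1)=1$, after which your $K_+$ and $K_-$ become $1-b_2(\lambda)$ and $-b_1(\lambda)$, and the conclusions $K_+\geqslant 0$, $K_-\leqslant 0$, $K_+=K_-$ are exactly the paper's $b_2(\lambda)\leqslant 1$, $b_1(\lambda)\geqslant 0$, $b_2(\lambda)-b_1(\lambda)=1$.
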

        The condition (ii) of this lemma can be interpreted as follows: the graph of $f$ restricted to the real line $L_\lambda:=\{(\lambda s,(1-\lambda)s)\mid s\in\R\}$ is asymptotically two rays when $s\ra\pm\infty$ with the difference of the slopes equal to $1$. Moreover, the two asymptotic rays share the same additive constant $C(\lambda)$.

        \begin{proof}[{Proof of Lemma \ref{LemmaOfConvexFunctions}}]
            Subtracting a linear function $(x,y)\mapsto b_1(1)x+b_1(0)y$ from $f(x,y)$ does not change the conditions (i), (ii) and the function $C(\lambda)$, so we can assume without loss of generality that $b_1(0)=b_1(1)=0$ and $b_2(0)=b_2(1)=1$. Equivalently, 
            \[\aligned
                &\lim_{s\ra-\infty} f(s,0)=\lim_{s\ra+\infty} (f(s,0)-s)=C(1),\\&\lim_{s\ra-\infty} f(0,s)=\lim_{s\ra+\infty} (f(0,s)-s)=C(0).\endaligned
            \]
            By condition (i), for fixed $\lambda\in[0,1]$ and $s\in\R$ we have
            \[
                LHS(s):=f(\lambda s,(1-\lambda)s)\leqslant \lambda f(s,0)+(1-\lambda)f(0,s)=:RHS(s).
            \]
            Let $s\ra -\infty$, by condition (ii) we have
            \[
                LHS(s)=b_1(\lambda)s+C(\lambda)+o(1),\quad RHS(s)=\lambda C(1)+(1-\lambda)C(0)+o(1),
            \] so we must have $b_1(\lambda)\geqslant 0$. Let $s\ra +\infty$, by condition (ii) we have
            \[
                LHS(s)=b_2(\lambda)s+C(\lambda)+o(1),\quad RHS(s)=s+\lambda C(1)+(1-\lambda)C(0)+o(1),
            \]so we must have $b_2(\lambda)\leqslant 1$. But $b_2(\lambda)-b_1(\lambda)=1$ by condition (ii), so $b_1(\lambda)=0$ and $b_2(\lambda)=1$. Hence
            \[
                C(\lambda)\leqslant \lambda C(1)+(1-\lambda)C(0).
            \]
            The proof is finished since the choice of $\lambda\in[0,1]$ is arbitrary.
        \end{proof}

    \begin{thm3}
        Let $N$ be a connected, orientable, irreducible, compact 3-manifold with empty or toral boundary. Then for each open Thurston cone $\mathcal C$ of $H^1(N;\R)$, the leading coefficient $C(N,\phi)$ is constant for all classes $\phi\in \mathcal C$.
    \end{thm3}
    
    \begin{proof}

\textit{Step 1.} Reduce to the case where $N$ is hyperbolic and thus has non-degenerate Thurston norm.
    
        Let $N_1,\ldots,N_r$ be the components in the JSJ-decomposition of $N$, each component is either hyperbolic or Seifert fibered. By Theorem \ref{PropertiesOfLeadingCoefficient} we have
        \[
        C(N,\phi)=\prod_{i=1}^r C(N_i,\phi|_{N_i}).
        \]
        We claim that if two classes $\phi_1,\phi_2\in H^1(N;\R)$ are in the same open Thurston cone, then the restrictions $\phi_1|_{N_i}$ and $\phi_2|_{N_i}$ are in the same open Thurston cone of $H^1(N_i;\R)$. If this claim is true, then without loss of generality, we can assume that $N$ is either Seifert fibered or hyperbolic. Every cohomology class of a Seifert fibered manifold has leading coefficient 1 by Theorem \ref{PropertiesOfLeadingCoefficient}, we only need to consider the case where $N$ is hyperbolic. In particular, $N$ has non-degenerate Thurston norm.

        It remains to prove the claim. Note that two classes $\phi_1,\phi_2$ are in the same open Thurston cone of $H^1(N;\R)$ if and only if there is an open segment $(u,v)$ containing $\phi_1,\phi_2$ on which the Thurston norm $x_N$ is linear. It follows from \cite[Proposition 3.5]{eisenbud1985three} that 
        \[
            x_N(\phi)=\sum_{i=1}^r x_{N_i}(\phi|_{N_i})
        \]
        where $\phi\in H^1(N;\R)$ is any class and $\phi|_{N_i}$ is the restriction of $\phi$ to $N_i$ (alternatively, one can use Theorem \ref{KnownPropertiesOfL2Alexander}\,(3),\,(7) to see this fact). Then by sub-additivity of the Thurston norm, $x_N$ being linear on the segment $(u,v)$ will imply that $x_{N_i}$ is linear on the segment $(u|_{N_i},v|_{N_i})$ for all $i=1,\ldots,r$. If $(u|_{N_i},v|_{N_i})$ is an open segment of $H^1(N_i;\R)$, then $\phi_1|_{N_i},\phi_2|_{N_i}$ are in the same open Thurston cone of $H^1(N_i;\R)$. The other possibility is that $(u|_{N_i},v|_{N_i})$ degenerates to a point, in this case $\phi_1|_{N_i}=\phi_2|_{N_i}$, in particular $\phi_1|_{N_i},\phi_2|_{N_i}$ are in the same open Thurston cone of $H^1(N_i;\R)$. This finishes the proof of the claim.

        \textit{Step 2.} We prove that for any open Thurston cone $\mathcal C$, the function
        \[
            \phi\longmapsto \log C(N,\phi)
        \]
        is convex over the open Thurston face $\{\phi\in \mathcal C\mid x_N(\phi)=1\}$. Precisely, we prove that for any $\phi_1,\phi_2\in \mathcal C$ with $x_N(\phi_1)=x_N(\phi_2)=1$, and any $\lambda\in[0,1]$,
        \begin{equation}\label{LinearityOfLeadingCoefficient}\tag{$\dagger$}
            \log C(N,\lambda\phi_1+(1-\lambda)\phi_2)\leqslant\lambda \log C(N,\phi_1)+(1-\lambda)\log C(N,\phi_2).
        \end{equation}

        When dealing with convexity properties of the $L^2$-Alexander torsions, it is often convenient to apply the change of variable $t=:e^s,\ s\in \R$. By Lemma \ref{BasicOfRelativeTorsion}\,(1), 
        \[\tautwo(N,\phi)(t)=\tautwo(N,\phi)(e^s)=\tautwo(N,s\phi)(e).\] 
        By Theorem \ref{KnownPropertiesOfL2Alexander}\,(7), the $L^2$-Alexander torsion is asymptotically monomial in both ends, i.e. there exists $b_1, b_2\in\R$ depending on $\phi$ such that
        \begin{equation}\tag{*}\label{*}
            \lim_{s\ra -\infty}(\log\tautwo(N,s\phi)(e)-b_1s)=\lim_{s\ra +\infty}(\log\tautwo(N,s\phi)(e)-b_2s)=\log C(N,\phi)
       \end{equation} 
       where $b_2-b_1=x_N(\phi)$.
       
       We fix two arbitrary classes $\phi_1,\phi_2\in \mathcal C$ with $x_N(\phi_1)=x_N(\phi_2)=1$. Define
        \[
            f:\R^2\longrightarrow \R,\quad f(s_1,s_2):=\log\tautwo(N,s_1\phi_1+s_2\phi_2)(e).
        \]
        We wish to apply Lemma \ref{LemmaOfConvexFunctions} to $f$. 
        By Lemma \ref{L2AlexanderTorsionFormulaForHyperbolic3Manifold}, there exists a positive integer $d$, a square matrix $A$ over $\Z G$ and $h\in G$, such that $\phi(h)\geqslant 0$ for all $\phi\in\mathcal C$ and
        \[
            (\tautwo(M,\phi)(t))^d=\rdet{G} (\kappa(\phi,t)A)/\max\{1,t^{\phi(h)}\}^2.
        \]
        Hence
        \[\aligned
            f(s_1,s_2)&=\frac{\log \rdet G(\kappa(s_1\phi_1+s_2\phi_2,e)A)}{d}-\frac{2\max\{0,s_1\phi_1(h)+s_2\phi_2(h)\}}{d}\\&=:I_1-I_2.\endaligned
        \]
        The first term $I_1$ is convex over $\R^2$ by Lemma \ref{MultiAlexanderConvexity}. The second term $I_2$ is linear when restricted to $\{(s_1,s_2)\in \R^2\mid s_1,s_2\geqslant0\}$ or $\{(s_1,s_2)\in \R^2\mid s_1,s_2\leqslant0\}$ (here we used $\phi_1(h)\geqslant0,\ \phi_2(h)\geqslant0$). So condition (i) of Lemma \ref{LemmaOfConvexFunctions} holds for $f$.
        As for condition (ii), by definition for any $s\in \R$ and $\lambda\in[0,1]$,
        \[
            f(\lambda s,(1-\lambda)s)=\log\tautwo(N,s(\lambda\phi_1+(1-\lambda)\phi_2))(e).
        \]
        Put $\phi=\lambda\phi_1+(1-\lambda)\phi_2$ in \eqref{*}, it follows that there exists $b_1,b_2\in \R$ such that
                \[
                    \lim_{s\ra-\infty}( f(\lambda s,(1-\lambda)s)-b_1s)=\lim_{s\ra+\infty}( f(\lambda s,(1-\lambda)s)-b_2s)=\log C(N,\lambda\phi_1+(1-\lambda)\phi_2)
                \]
                where $b_2-b_1=x_N(\lambda\phi_1+(1-\lambda)\phi_2)=1$ since $\phi_1,\phi_2$ belong to the same open Thurston face. So condition (ii) holds for $f$. Now Lemma \ref{LemmaOfConvexFunctions} implies that
                \[
                    \log C(N,\lambda\phi_1+(1-\lambda)\phi_2)\leqslant\lambda\log C(N,\phi_1)+(1-\lambda)\log C(N,\phi_2)
                \]
        for any $\lambda\in[0,1]$. This proves \eqref{LinearityOfLeadingCoefficient}.
        
\textit{Step 3.} We have just proved that the function $\phi\mapsto\log C(N,\phi)$ is convex over the open Thurston face of $\mathcal C$. This function is upper semi-continuous by Theorem \ref{PropertiesOfLeadingCoefficient}\,(6), so it is continuous over the open Thurston face. By dilatation invariance (Theorem \ref{PropertiesOfLeadingCoefficient}\,(1)) we know that the function
\[
    \phi\longmapsto C(N,\phi),\quad \phi\in\mathcal C
\]
is continuous over the whole open Thurston cone $\mathcal C$.

By Corollary \ref{FinitelyManyGutsInOpenThurstonCone} there are at most finitely many equivalent classes of guts $\Gamma(z)$ for primitive $z\in\mathcal C$. By Theorem \ref{LeadingCoefficientEqualsGuts} and dilatation invariance of the leading coefficient, there are only finitely many possible values $C(N,\phi)$ for rational classes $\phi\in\mathcal C$. It follows that the leading coefficient must be constant over $\mathcal C$ by continuity. This finishes the proof of the Theorem.
    \end{proof}

    Given two nonzero cohomology classes $\phi,\psi\in H^1(N;\R)$, we say $\phi$ \textit{is subordinate to} $\psi$ if $\phi$ belongs to the closure of the unique open Thurston cone containing $\psi$. Here it is convenient to assume that $0\in H^1(N;\R)$ is subordinate to all other classes. We end the paper with a corollary which follows from Theorem \ref{InvarianceOfLeadingCoefficientOpenCone} and the upper semi-continuity of the leading coefficient function (Theorem \ref{PropertiesOfLeadingCoefficient} (6)).
  
  \begin{cor}\label{Subordination}
      Let $N$ be a connected, orientable, irreducible, compact 3-manifold with empty or toral boundary, then there are only finitely many possible values for the leading coefficients $C(N,\phi)$ as $\phi$ varies in $H^1(N;\R)$. Given $\phi,\psi\in H^1(N;\R)$ with $\phi$ subordinate to $\psi$, then $C(N,\phi)\geqslant C(N,\psi)$.
  \end{cor}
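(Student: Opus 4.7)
The plan is to assemble Theorem \ref{InvarianceOfLeadingCoefficientOpenCone} with the polyhedrality of the Thurston norm ball and the upper semi-continuity from Theorem \ref{PropertiesOfLeadingCoefficient}\,(6); no new analytic ingredient is required.

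For the finiteness statement, I would first invoke Thurston's result \cite{thurston1986norm} that the unit ball $B_x(N)\subset H^1(N;\R)$ is a finite convex (possibly non-compact) polyhedron, hence has only finitely many faces. Recall the paper's dichotomy: every open Thurston cone is either the cone on a face of $B_x(N)$ or a maximal connected component of $\{x_N=0\}\setminus\{0\}$; both kinds are in finite supply, so $H^1(N;\R)\setminus\{0\}$ partitions into finitely many open Thurston cones. Theorem \ref{InvarianceOfLeadingCoefficientOpenCone} says that $\phi\mapsto C(N,\phi)$ is constant on each such cone, and adjoining the single extra value $C(N,0)$ gives only finitely many possible leading coefficients in total.

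For the subordination inequality, let $\mathcal C$ denote the unique open Thurston cone containing $\psi$, so that $\phi\in\overline{\mathcal C}$ by definition of subordination (this is consistent with the convention that $\phi=0$ is subordinate to every nonzero class, because the origin lies in the closure of every open Thurston cone). Choose any sequence $\phi_n\in\mathcal C$ with $\phi_n\to\phi$. Theorem \ref{InvarianceOfLeadingCoefficientOpenCone} gives $C(N,\phi_n)=C(N,\psi)$ for every $n$, and then upper semi-continuity at $\phi$ (Theorem \ref{PropertiesOfLeadingCoefficient}\,(6)) yields
\[
C(N,\phi)\ \geqslant\ \limsup_{n\to\infty} C(N,\phi_n)\ =\ C(N,\psi),
\]
which is the desired inequality. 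There is no substantive obstacle here: both statements follow by direct assembly of the cited results, and the only book-keeping is the verification that $0\in\overline{\mathcal C}$ so that a sequence $\phi_n\to\phi$ in $\mathcal C$ exists in every case.
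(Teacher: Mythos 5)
Your proposal is correct and matches the paper's intended argument: the paper derives this corollary exactly by combining Theorem \ref{InvarianceOfLeadingCoefficientOpenCone} with the upper semi-continuity of $\phi\mapsto C(N,\phi)$ from Theorem \ref{PropertiesOfLeadingCoefficient}\,(6), together with the fact that $H^1(N;\R)\setminus\{0\}$ decomposes into finitely many open Thurston cones. Your write-up simply makes explicit the bookkeeping (finitely many faces of $B_x(N)$, the approximating sequence $\phi_n\in\mathcal C$ with $\phi_n\to\phi$) that the paper leaves implicit, so there is nothing to correct.
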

  
  \bibliography{ref.bib}

\newcommand{\etalchar}[1]{$^{#1}$}
\providecommand{\bysame}{\leavevmode\hbox to3em{\hrulefill}\thinspace}
\providecommand{\MR}{\relax\ifhmode\unskip\space\fi MR }
\providecommand{\MRhref}[2]{%
  \href{http://www.ams.org/mathscinet-getitem?mr=#1}{#2}
}
\providecommand{\href}[2]{#2}
\begin{thebibliography}{BAFH22}

\bibitem[AFW15]{aschenbrenner20153}
M.~Aschenbrenner, S.~Friedl, and H.~Wilton, \emph{{3-manifold Groups}}, EMS series of lectures in mathematics, European Mathematical Society, 2015.

\bibitem[AGM13]{agol2013virtual}
Ian Agol, Daniel Groves, and Jason Manning, \emph{{The virtual Haken conjecture}}, Doc. Math \textbf{18} (2013), no.~1, 1045--1087.

\bibitem[Ago08]{agol2008criteria}
Ian Agol, \emph{{Criteria for virtual fibering}}, Journal of Topology \textbf{1} (2008), no.~2, 269--284.

\bibitem[AZ22]{agol2022guts}
Ian Agol and Yue Zhang, \emph{{Guts in sutured decompositions and the Thurston norm}}, arXiv preprint arXiv:2203.12095 (2022).

\bibitem[BAFH22]{ben2022leading}
Fathi Ben~Aribi, Stefan Friedl, and Gerrit Herrmann, \emph{{The leading coefficient of the $L^2$-Alexander torsion}}, Annales de l'Institut Fourier \textbf{72} (2022), no.~5, 1993--2035.

\bibitem[BRW14]{Boileau2014Finitenessof3-manifolds}
Michel Boileau, J.~Hyam Rubinstein, and Shicheng Wang, \emph{Finiteness of 3-manifolds associated with non-zero degree mappings}, Comment. Math. Helv. \textbf{89} (2014), no.~1, 33--68. \MR{3177908}

\bibitem[CFM97]{mathai1997determinant}
A.~Carey, M.~Farber, and V.~Mathai, \emph{{Determinant lines, von Neumann algebras and $L^2$-torsion}}, Journal für die reine und angewandte Mathematik \textbf{1997} (1997), no.~484, 153--182.

\bibitem[DFL16]{dubois2015l2}
J{\'e}r{\^o}me Dubois, Stefan Friedl, and Wolfgang L{\"u}ck, \emph{{The $L^2$-Alexander torsions of 3-manifolds}}, Journal of Topology \textbf{9} (2016), no.~3, 889--926.

\bibitem[Dua24]{duan2022positivity}
Jianru Duan, \emph{{On the positivity of twisted $L^2$-torsion for 3-manifolds}}, Algebraic \& Geometric Topology \textbf{24} (2024), no.~4, 2307–2329.

\bibitem[ENN85]{eisenbud1985three}
David Eisenbud, Walter Neumann, and Walter~D Neumann, \emph{{Three-dimensional link theory and invariants of plane curve singularities}}, no. 110, Princeton University Press, 1985.

\bibitem[FF98]{Freedman1998KneserHakenFiniteness}
Benedict Freedman and Michael~H. Freedman, \emph{Kneser-{H}aken finiteness for bounded {$3$}-manifolds locally free groups, and cyclic covers}, Topology \textbf{37} (1998), no.~1, 133--147. \MR{1480882}

\bibitem[FL19]{friedl2019l2}
Stefan Friedl and Wolfgang L{\"u}ck, \emph{{The $L^2$-torsion function and the Thurston norm of 3-manifolds}}, Comment. Math. Helv \textbf{94} (2019), no.~1, 21--52.

\bibitem[FLP{\etalchar{+}}12]{FLP2012ThurstonWorkOnSurface}
Albert Fathi, François Laudenbach, Valentin Poénaru, Djun~M. Kim, and Dan Margalit, \emph{Thurston's work on surfaces (mn-48)}, vol.~48, Princeton University Press, 2012.

\bibitem[Gab87]{gabai1987foliations}
David Gabai, \emph{{Foliations and the topology of 3-manifolds. II}}, Journal of Differential Geometry \textbf{26} (1987), no.~3, 461--478.

\bibitem[Hem87]{hempel1987residual}
John Hempel, \emph{{Residual finiteness for 3-manifolds}}, Combinatorial group theory and topology (Alta, Utah, 1984) \textbf{111} (1987), 379--396.

\bibitem[Her23]{herrmann2023sutured}
Gerrit Herrmann, \emph{{Sutured manifolds and $l^2$-Betti numbers}}, The Quarterly Journal of Mathematics \textbf{74} (2023), no.~4, 1435--1455.

\bibitem[Kam19]{kammeyer2019introduction}
Holger Kammeyer, \emph{{Introduction to $L^2$-invariants}}, vol. 2247, Springer Nature, 2019.

\bibitem[Kie20]{Kielak2019RFRSGroupsandVirtualFibering}
Dawid Kielak, \emph{Residually finite rationally solvable groups and virtual fibring}, J. Amer. Math. Soc. \textbf{33} (2020), no.~2, 451--486. \MR{4073866}

\bibitem[Liu17]{liu2017degree}
Yi~Liu, \emph{{Degree of $L^2$-Alexander torsion for 3-manifolds}}, Inventiones mathematicae \textbf{207} (2017), no.~3, 981--1030.

\bibitem[LS99]{luck19992}
Wolfgang L{\"u}ck and Thomas Schick, \emph{{$L^2$-torsion of hyperbolic manifolds of finite volume}}, Geometric \& Functional Analysis GAFA \textbf{9} (1999), no.~3, 518--567.

\bibitem[L{\"u}c02]{luck2002l2}
Wolfgang L{\"u}ck, \emph{{$L^2$-invariants: theory and applications to geometry and K-theory}}, vol.~44, Springer, 2002.

\bibitem[L{\"u}c18]{luck2018twisting}
\bysame, \emph{{Twisting $L^2$-invariants with finite-dimensional representations}}, Journal of Topology and Analysis \textbf{10} (2018), no.~04, 723--816.

\bibitem[LZ06a]{li2006alexander}
Weiping Li and Weiping Zhang, \emph{{An $L^2$-Alexander-Conway Invariant for Knots and the Volume conjecture}}, vol.~10, pp.~303--312, World Scientific Hackensack, NJ, 2006.

\bibitem[LZ06b]{li2006l2}
\bysame, \emph{{An $L^2$-Alexander invariant for knots}}, Communications in Contemporary Mathematics \textbf{8} (2006), no.~02, 167--187.

\bibitem[Mun66]{Munkres1961ElementaryDiff}
James~R. Munkres, \emph{Elementary differential topology}, revised ed., Annals of Mathematics Studies, vol. No. 54, Princeton University Press, Princeton, NJ, 1966, Lectures given at Massachusetts Institute of Technology, Fall, 1961. \MR{198479}

\bibitem[Thu86]{thurston1986norm}
William~P Thurston, \emph{{A norm for the homology of 3-manifolds}}, Mem. Amer. Math. Soc. \textbf{59} (1986), 99--130.

\bibitem[Ver84]{Verona1984StratifiedMappings}
Andrei Verona, \emph{Stratified mappings---structure and triangulability}, Lecture Notes in Mathematics, vol. 1102, Springer-Verlag, Berlin, 1984. \MR{771120}

\bibitem[Wal78]{waldhausen1978algebraicPart2}
Friedhelm Waldhausen, \emph{{Algebraic k-theory of generalized free products, part 2}}, Annals of Mathematics \textbf{108} (1978), no.~2, 205--256.

\bibitem[Whi40]{Whitehead1940OnC1Complexes}
J.~H.~C. Whitehead, \emph{On {$C^1$}-complexes}, Ann. of Math. (2) \textbf{41} (1940), 809--824. \MR{2545}

\end{thebibliography}

\end{document}